\documentclass[11pt, notitlepage]{article}      % Comments after  % are ignored^M
\usepackage{amsmath,amssymb,amscd,amsthm, graphicx, verbatim, setspace} % Typical maths resource packages^M
\usepackage{authblk}

\theoremstyle{plain}
\newtheorem{thm}{Theorem}[section]
\newtheorem{lem}[thm]{Lemma}
\newtheorem{cor}[thm]{Corollary}

\usepackage{mathtools}
\usepackage{amsthm}
\usepackage{amssymb}
\usepackage{amsfonts}
\usepackage{graphicx}
\usepackage{ytableau}
\usepackage{hyperref}
\usepackage[utf8]{inputenc}
\usepackage{todonotes}
\usepackage{tikz}
\usepackage{comment}
\DeclarePairedDelimiter{\ceil}{\lceil}{\rceil}

\newcommand{\ex}{\operatorname{ex}}
\newcommand{\Ex}{\operatorname{Ex}}
\newcommand{\F}{\operatorname{F}}
\newcommand{\up}{\operatorname{up}}
\newcommand{\fw}{\operatorname{fw}}
\newcommand{\fl}{\operatorname{fl}}
\newcommand{\Up}{\operatorname{Up}}

\DeclareMathOperator{\LCS}{LCS}
% Text dimensions
% Text dimensions
\setlength{\textheight}{8.8in}
\setlength{\textwidth}{6.5in}
\voffset = -14mm
\hoffset = -18mm		% 11 - ( 1.5 + \topmargin + <bottom-margin> )

\begin{document}
%\date{}

\title{Formations and generalized Davenport-Schinzel sequences}

\author{Jesse Geneson \thanks{Dept.~of Mathematics and Statistics, San Jose State University, San Jose, CA, USA (jesse.geneson@sjsu.edu) } \and Peter Tian \thanks{Dept.~of Operations Research and Financial Engineering, Princeton University, Princeton, NJ, USA (ptian@princeton.edu)} \and Katherine Tung \thanks{Harvard University, Cambridge, MA, USA (katherinetung@college.harvard.edu)} 
}

\maketitle

\begin{abstract}
Let $\up(r, t) = (a_1 a_2 \dots a_r)^t$. We investigate the problem of determining the maximum possible integer $n(r, t)$ for which there exist $2t-1$ permutations $\pi_1, \pi_2, \dots, \pi_{2t-1}$ of $1, 2, \dots, n(r, t)$ such that the concatenated sequence $\pi_1 \pi_2 \dots \pi_{2t-1}$ has no subsequence isomorphic to $\up(r,t)$. This quantity has been used to obtain an upper bound on the maximum number of edges in $k$-quasiplanar graphs. It was proved by (Geneson, Prasad, and Tidor, Electronic Journal of Combinatorics, 2014) that $n(r, t) \le (r-1)^{2^{2t-2}}$.

We prove that $n(r,t) = \Theta(r^{2t-1 \choose t})$, where the constant in the bound depends only on $t$. Using our upper bound in the case $t = 2$, we also sharpen an upper bound of (Klazar, Integers, 2002), who proved that $\Ex(\up(r,2),n) < (2n+1)L$ where $L = \Ex(\up(r,2),K-1)+1$, $K = (r-1)^4 + 1$, and $\Ex(u, n)$ denotes the extremal function for forbidden generalized Davenport-Schinzel sequences.  We prove that $K = (r-1)^4 + 1$ in Klazar's bound can be replaced with $K = (r-1) \binom{r}{2}+1$. 

We also prove a conjecture from (Geneson, Prasad, and Tidor, Electronic Journal of Combinatorics, 2014) by showing for $t \geq 1$ that $\Ex(a b c (a c b)^{t} a b c, n) = n 2^{\frac{1}{t!}\alpha(n)^{t} \pm O(\alpha(n)^{t-1})}$. In addition, we prove that $\Ex(a b c a c b (a b c)^{t} a c b, n) = n 2^{\frac{1}{(t+1)!}\alpha(n)^{t+1} \pm O(\alpha(n)^{t})}$ for all $t \geq 1$.
\end{abstract}

\section{Introduction}

We say that a sequence $v$ \emph{contains} a sequence $u$ if $v$ has some subsequence $v'$ (not necessarily contiguous) that is isomorphic to $u$ ($v'$ can be changed into $u$ by a one-to-one renaming of its letters). Otherwise $v$ \emph{avoids} $u$. We call a sequence \emph{$r$-sparse} if every $r$ consecutive letters are distinct. \emph{Davenport-Schinzel sequences of order $s$} avoid alternations of length $s+2$ and have no adjacent same letters \cite{DS}. \emph{Generalized Davenport-Schinzel sequences} avoid a forbidden sequence $u$ (or a family of sequences) and are $r$-sparse, where $r$ is the number of distinct letters in $u$. 

For any sequence $u$, define $\Ex(u, n)$ to be the maximum possible length of an $r$-sparse sequence with $n$ distinct letters that avoids $u$, where $r$ is the number of distinct letters in $u$. Furthermore, define $\Ex(u, n, m)$ to be the maximum possible length of a sequence with $n$ distinct letters that avoids $u$ and can be partitioned into $m$ contiguous blocks of distinct letters. Applications of $\Ex(u, n)$ include upper bounds on the complexity of lower envelopes of sets of polynomials of bounded degree \cite{DS}, the complexity of faces in arrangements of arcs with bounded pairwise crossings \cite{agsh}, and the maximum number of edges in $k$-quasiplanar graphs \cite{foxpachsuk}. The function $\Ex(u, n,m)$ has been used to find bounds on $\Ex(u, n)$.

Bounds on $\Ex(u, n)$ are known for several families of sequences such as alternations \cite{agshsh,niv, pettie} and more generally the sequences $\up(r, t) = (a_1 a_2 \dots a_r)^t$ \cite{gpt}. Let $a_s$ denote the alternation of length $s$. It is known that that $\Ex(a_3, n) = n$, $\Ex(a_4, n) = 2n-1$, $\Ex(a_5, n) = 2n \alpha(n) + O(n)$, $\Ex(a_6, n) = \Theta(n 2^{\alpha(n)})$, $\Ex(a_7, n) = \Theta(n \alpha(n) 2^{\alpha(n)})$, and $\Ex(a_{s+2}, n) = n 2^{\frac{\alpha^{t}(n)}{t!}  \pm O(\alpha(n)^{t-1})}$ for all $s \geq 6$, where $t = \lfloor \frac{s-2}{2} \rfloor$ \cite{DS,agshsh,niv,pettie}. 

Relatively little about $\Ex(u, n)$ is known for arbitrary forbidden sequences $u$. However, one way to find upper bounds on $\Ex(u, n)$ for any sequence $u$ is to use \emph{$(r, s)$-formations}, which are concatenations of $s$ permutations of $r$ distinct letters. We define $\mathcal{F}_{r, s}$ to be the family of all $(r, s)$-formations. We define the function $\F_{r, s}(n)$ to be the maximum possible length of an $r$-sparse sequence with $n$ distinct letters that avoids all $(r, s)$-formations, and we define the function $\F_{r, s}(n, m)$ to be the maximum possible length of a sequence with $n$ distinct letters that avoids all $(r, s)$-formations and can be partitioned into $m$ blocks of distinct letters. Like $\Ex(u, n, m)$ and $\Ex(u, n)$, the function $\F_{r, s}(n, m)$ has been used to find bounds on $\F_{r, s}(n)$.

Let the formation width $\fw(u)$ denote the minimum $s$ for which there exists $r$ such that every $(r, s)$-formation contains $u$, and let the formation length $\fl(u)$ denote the minimum value of $r$ for which every $(r, \fw(u))$-formation contains $u$. These parameters were defined in \cite{gpt}, where it was observed that $\Ex(u, n) = O(\F_{\fl(u), \fw(u)}(n))$. This uses the fact that increasing the sparsity in the definition of $\Ex(u, n)$ only changes the value by at most a constant factor, which was proved by Klazar in \cite{klazar1}. Using the upper bound with $\fw(u)$ and known bounds on $\F_{r, s}(n)$, it is possible to find sharp bounds on $\Ex(u, n)$ for many sequences $u$.

In \cite{gpt}, Geneson, Prasad, and Tidor proved that $\fw(\up(r, t)) = 2t-1$ and $\fl(\up(r,t)) \le (r-1)^{2^{2t-2}}+1$. This implies $\Ex(\up(r, t), n) = n 2^{\frac{1}{(t-2)!}\alpha(n)^{t-2} \pm O(\alpha(n)^{t-3})}$ for every $r \ge 2$ and $t \ge 3$, where the constants in the bounds depend on $r$. They used this to sharpen the upper bound from \cite{foxpachsuk} on the maximum number of edges in $k$-quasiplanar graphs where no pair of edges intersect in more than $O(1)$ points.

They also proved that $\fw(u) = 4$ and $\Ex(u, n) = \Theta(n \alpha(n))$ for any sequence $u$ of the form $a v a v' a$ such that $a$ is a letter, $v$ is a nonempty sequence of distinct letters excluding $a$, and $v'$ is obtained from $v$ by only shifting the first letter of $v$. Based on computing $\fw(abc(acb)^t a b c)$ for small values of $t$, they conjectured in \cite{gpt} that $\fw(abc(acb)^t a b c) = 2t+3$ for all $t \geq 0$ and that $\Ex(a b c (a c b)^{t} a b c, n) = n 2^{\frac{1}{t!}\alpha(n)^{t} \pm O(\alpha(n)^{t-1})}$ for $t \geq 1$. We affirm this conjecture, and we also prove that $\fw(abcacb(abc)^{t}acb) =2t+5$ and $\Ex(a b c a c b (a b c)^{t} a c b, n) = n 2^{\frac{1}{(t+1)!}\alpha(n)^{t+1} \pm O(\alpha(n)^{t})}$ for $t \geq 1$. In addition, we improve an upper bound of Klazar \cite{klazar}, who proved that $\Ex(\up(r, 2), n) < (2n+1)L$, with $L = \Ex(\up(r, 2),K-1)+1$ and $K = (r-1)^4 + 1$. Here we prove that $K = (r-1)^4 + 1$ in Klazar's bound can be replaced with $K = (r-1)\binom{r}{2}+1$. 

We obtain this bound by proving that every $((r-1)\binom{r}{2}+1,3)$-formation contains $\up(r, 2)$, using a result about strongly unimodal sequences. On the other hand, we also prove that this result is sharp up to a constant factor. Specifically, we prove that there exist $(m, 3)$-formations with $m = \Omega(r^3)$ which avoid $\up(r, 2)$. As a result, we have $\fl(\up(r,2)) = \Theta(r^3)$. A similar result was also proved in \cite{beame}, but our result is better by a constant factor. See Section~\ref{upr2n}.

More generally, we prove that $\fl(\up(r,t)) = \Theta(r^{2t-1 \choose t})$, where the constants in the bound depend only on $t$. This improves the upper bound on $\fl(\up(r,t))$ from \cite{gpt}, and we prove a lower bound that matches the upper bound up to a constant factor that depends only on $t$. Using Klazar's sparsity lemma from \cite{klazar1}, our upper bound on $\fl(\up(r,t))$ also implies for all $n, r, t \ge 1$ that
$$\Ex(\up(r, t),n) \le (1+\Ex(\up(r,t),(r-1)^{{2t-1 \choose t}}))\F_{(r-1)^{{2t-1 \choose t}}+1, 2t-1}(n).$$ 

These new results are proved in Section \ref{uprtupper}.

In addition to using formations to obtain upper bounds on $\Ex(\up(r, t),n)$, we also use formations in Section~\ref{fws} to bound the extremal functions of other forbidden sequences. We show that $\Ex(a b c (a c b)^{t} a b c, n) = n 2^{\frac{1}{t!}\alpha(n)^{t} \pm O(\alpha(n)^{t-1})}$ and $\Ex(a b c a c b (a b c)^{t} a c b, n) = n 2^{\frac{1}{(t+1)!}\alpha(n)^{t+1} \pm O(\alpha(n)^{t})}$ using formation width. 

In Section \ref{exact}, we investigate subsequences $u$ of $\up(r,2)$ for which the exact values of $\Ex(u,n)$ and $\Ex(u,n,m)$ were not previously known. We find the exact values of $\Ex(\up(r,1) a_x, n)$ and $\Ex(\up(r,1) a_x, n, m)$ for $x \in \left\{1, \dots, r\right\}$. We also determine the exact values of $\F_{r, 2}(n)$, $\F_{r, 3}(n)$, $\F_{r, 2}(n, m)$, and $\F_{r, 3}(n, m)$. In Section \ref{ddim}, we extend the exact results about formations in sequences from Section~\ref{exact} to exact results about formations in $d$-dimensional 0-1 matrices.

\section{Definitions}

A \textit{restricted} $up(r,2)$ is an $\up(r,2)$ completely contained within any two permutations among $\{\pi_1, \pi_2, \pi_3\}$ in a formation $[\pi_1, \pi_2, \pi_3]$. For example, $[12345, 15432, 32514]$ has a restricted $\up(2,2)$ of ${(24)}^2$ in $[\pi_1, \pi_3]$ and a (non-restricted) $\up(3,2)$ of ${(514)^2}$.

Generalizing the definition of a restricted $\up(r,2)$, a restricted $\up(r,t)$ is an $\up(r,t)$ completely contained within any $t$ permutations among $\{\pi_1, \pi_2, \pi_{2t - 1}\}$ in a formation $[\pi_1, \pi_2, ..., \pi_{2t - 1}]$. We denote a restricted $\up(r,t)$ as $\Up(r,t)$.

Whether a formation contains $\up(r,t)$ may depend on the order of the permutations of the formation, but whether a formation contains $\Up(r,t)$ is invariant under reordering the permutations.

A permutation of a set $S$ is said to have \textit{length} $|S|$, i.e., the length of a permutation is the number of characters in the permutation. Similarly, the length of a subsequence of a permutation is the number of characters in the subsequence. Let $\LCS(\pi_i,\pi_j)$ be the length of the longest common subsequence of $\pi_i$ and $\pi_j$. Note that $\LCS(\pi_i, \pi_j)$ is the maximum $m$ such that a restricted $\up(m,2)$ configuration is present in $\pi_i$ and $\pi_j$ within the formation $[\pi_1, \pi_2, \pi_3]$.

Let $\pi$ be a permutation of a totally ordered set $S$ and $\sigma$ a permutation of a totally ordered set $T$. We define a permutation $\pi \otimes \sigma$ of the Cartesian product $S \times T$ with the lexicographic ordering by $\pi \otimes \sigma (x,y) = (\pi(x),\sigma(y))$. 

A subsequence $a_1, a_2, ... a_t$ of a permutation $\pi$ is called \emph{strongly unimodal} if it is increasing or decreasing or for some $k \in \{2, ..., t - 1\}$, $$a_1 < a_2 < \ldots < a_k > a_{k+1} > \ldots > a_t. $$  

If $a,b$ are nonnegative integers, let $a \oplus b$ be their nim-sum, i.e., the integer whose binary expansion is the bitwise-XOR of the binary expansions of $a$ and $b$. So $4 \oplus 6 = 2.$ 

Suppose $n=2^k$ and $0\le m<n$. Define $\tau_m$ to be the involution in $S_{\{0,1,2...,n-1\}}$ so $\tau_m(a) = m \oplus a.$

For example, if $n=8$ then $\tau_4 = \tau_{100_2} = [45670123]$ in table form and $\tau_6 = [67452301]$. 

These can be used to construct formations with no $\Up(r,t)$.

\section{Bounds for $\up(r, 2)$}\label{upr2n}

In this section, we show that $\fl(\up(r,2)) = \Theta(r^3)$.

Klazar's proof that $\Ex(\up(r, 2), n) < (2n+1)L$, where $L = \Ex(\up(r, 2),K-1)+1$ and $K = (r-1)^4 + 1$, uses the Erd\H{o}s-Szekeres theorem to find the copy of $\up(r, 2)$ \cite{klazar}. We sharpen the upper bound on $\Ex(\up(r, 2), n)$ by proving that every $((r-1)\binom{r}{2}+1,3)$-formation contains $\up(r, 2)$. We also show that this containment result is best possible up to a constant factor by proving that there exist $(m, 3)$-formations with $m = \Omega(r^3)$ which avoid $\up(r, 2)$. 

The following result is mentioned in \cite{chung} as an unpublished result of Steele and Chv\'atal. The proof is not explicitly given in \cite{chung} so we supply one here.

\begin{thm}\cite{chung}
\label{unimodal}
Any permutation of length ${t \choose 2}+1$ contains a strongly unimodal sequence of length $t$.
\end{thm}
\begin{proof}
For any index $i$, let $x(t)$ be the length of the longest increasing subsequence ending in position $i$. Let $y(t)$ be the length of the longest decreasing subsequence starting in position $i$. There is a strongly unimodal sequence of length $\max_{i} x(i)+y(i)-1$, and the map $i \mapsto (x(i),y(i))$ is injective. There are only $t \choose 2$ possible images with $x+y-1 < t$.

\end{proof}
Theorem \ref{unimodal} is sharp. There are permutations of $\{1, 2, ..., {t \choose 2}\}$ with no unimodal sequence of length $t$. See below for a general formula and Figure 1 for an example with $t = 4$.

$${t \choose 2}\hspace{10px}{t \choose 2}-2\hspace{10px} {t \choose 2}-1\hspace{10px} {t \choose 2}-5\hspace{10px} {t \choose 2}-4\hspace{10px} {t \choose 2}-3\hspace{10px} ...\hspace{10px} 1\hspace{10px} 2\hspace{10px} 3\hspace{10px} ...\hspace{10px} t-1$$

\begin{figure}\label{no-strong-unimodal}
\centering
{\begin{tikzpicture}

\foreach \point in {(5,2),(4,1),(3,0)}{% points
    \fill \point circle (2pt);
}
\foreach \point in {(2,4),(1,3)}{% points
    \fill \point circle (2pt);
}
\foreach \point in {(0,5)}{% points
    \fill \point circle (2pt);
}
\end{tikzpicture}}
\caption{A geometric representation of the permutation $645123$, which is a permutation of length ${t \choose 2}$ with no strongly unimodal sequence of length $t$ for $t = 4$.}
\label{wedge}
\end{figure}

\begin{thm}\label{upperformupr2}

If $n = {r \choose 2}(r - 1) + 1$ then any $(n,3)$-formation contains an $\up(r,2)$.
\end{thm}
\begin{proof}
Let the formation be $[\pi_1,\pi_2,\pi_3]$. Without loss of generality, let $\pi_1=e$. By the Erd\H{o}s-Szekeres theorem, $\pi_2$ either contains an increasing subsequence of length $r$ or a decreasing subsequence of length ${r \choose 2}+1$. 
In the first case, $[\pi_1, \pi_2]$ contains an $\up(r,2)$ on the symbols of the increasing subsequence. 
In the second case, consider the ${r \choose 2}+1$ symbols of the decreasing subsequence in $\pi_3$. By Theorem \ref{unimodal}, this permutation contained in $\pi_3$ has a strongly unimodal subsequence of length $r$, in positions $a_1 < a_2 < \ldots < a_r$ with $\pi_3(a_1) < \ldots < \pi_3(a_k) > \pi_3(a_{k+1})>\ldots>\pi_3(a_r)$. Then the values $\pi_3(a_1)<\pi_3(a_2)< ... <\pi_3(a_k)$ in $\pi_1$ and the values $\pi_3(a_{k+1})> ... > \pi_3(a_r)$ in $\pi_2$ form a pattern in $[\pi_1,\pi_2]$ repeated in $\pi_3$, hence $[\pi_1,\pi_2,\pi_3]$ contains an $\up(r,2)$.

\end{proof}

\begin{cor}\label{flupr2upper}
For all $r$, we have $\fl(\up(r,2)) \le {r \choose 2}(r - 1) + 1$.
\end{cor}

Besides the use of Theorem \ref{upperformupr2}, the proof of the next theorem is the same as the proof of Klazar's bound in \cite{klazar}. 

\begin{thm}\label{vr}
$\Ex(\up(r, 2), n) < (2n+1)L$, where $L = \Ex(\up(r, 2),K-1)+1$ and $K = (r-1)\binom{r}{2} + 1$. 
\end{thm}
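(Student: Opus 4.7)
The plan is to follow Klazar's original argument for the bound $\Ex(\up(r,2), n) < (2n+1)L$ from \cite{klazar}, substituting Theorem~\ref{upr2} for the Erd\H{o}s--Szekeres application that produced Klazar's $K = (r-1)^4 + 1$. Start with an $r$-sparse sequence $u$ on $n$ distinct letters avoiding $\up(r,2)$, and mark in $u$ the first occurrence and the last occurrence of each letter. This produces at most $2n$ marked positions, which cut $u$ into at most $2n+1$ maximal contiguous blocks of unmarked positions (some possibly empty).

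The key step I would prove is that each such block $B$ has fewer than $K = (r-1)^3 + 1$ distinct letters. I would argue this by contradiction: if $B$ contains $K$ distinct letters, then since $B$ holds no first or last occurrences, each of those letters has a first occurrence strictly before $B$ and a last occurrence strictly after $B$. Listing the first occurrences of these $K$ letters in their order of appearance yields a permutation $\pi_1$ of the $K$ letters; selecting inside $B$ the first occurrence of each of them yields a permutation $\pi_2$; and listing their last occurrences (all of which lie after $B$) yields a permutation $\pi_3$. The concatenation $\pi_1 \pi_2 \pi_3$ is then a $(K, 3)$-formation appearing as a subsequence of $u$, which by Theorem~\ref{upr2} contains $\up(r,2)$, contradicting the choice of $u$.

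With the claim in hand, each block is a contiguous subsequence of $u$, hence is itself $r$-sparse and $\up(r,2)$-avoiding, and it has at most $K-1$ distinct letters, so its length is at most $\Ex(\up(r,2), K-1) = L-1$. Summing across the at most $2n+1$ blocks and accounting for the at most $2n$ marked positions gives $|u| \leq (2n+1)(L-1) + 2n = (2n+1)L - 1 < (2n+1)L$.

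The main obstacle is the careful extraction of the three permutations forming the $(K,3)$-formation from the marked-position decomposition; once that is verified, the improvement from $(r-1)^4 + 1$ to $(r-1)^3 + 1$ is inherited directly from Theorem~\ref{upr2} and no further change to Klazar's argument is required.
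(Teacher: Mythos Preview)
Your proof is correct and follows essentially the same approach as the paper: both arguments use that an interval containing no first or last occurrence of any letter must either have fewer than $K=(r-1)^3+1$ distinct letters (hence length at most $L-1$) or yield a $(K,3)$-formation, which by Theorem~\ref{upr2} contains $\up(r,2)$. The only cosmetic difference is that the paper argues by contradiction, splitting a hypothetical sequence of length at least $(2n+1)L$ into $2n+1$ intervals of length $\geq L$ and applying pigeonhole to find one with no first/last occurrence, while you directly bound each of the at most $2n+1$ unmarked blocks and add back the $2n$ marked positions.
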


\begin{proof}
Let $u$ be an $r$-sparse sequence with at most $n$ distinct letters. Suppose that $u$ has length at least $(2n+1)L$. Split $u$ into $2n+1$ disjoint intervals, each of length at least $L$. At least one interval $I$ contains no first or last occurrence of any letter in $u$. If $I$ has fewer than $K$ distinct letters, then $I$ contains $\up(r, 2)$ by the definition of $I$ and $L$. If $I$ has at least $K$ distinct letters, then all of these letters occur before $I$, in $I$, and after $I$. Thus $u$ contains an $((r-1)\binom{r}{2} + 1, 3)$-formation. By Theorem \ref{upperformupr2}, $u$ contains $\up(r, 2)$, completing the proof.
\end{proof}

In the remainder of this section, we prove that the bound in Corollary \ref{flupr2upper} is sharp up to a constant factor. In order to prove this, we first show that any $\up(r, 2)$ in an $(n, 3)$-formation must contain a restricted $\up(\lceil r/3 \rceil,2).$

\begin{lem}\label{upr2lemrestrict}
Any $(n,3)$-formation containing an $\up(r,2)$ contains a restricted $\up(\lceil r/3 \rceil,2).$
\end{lem}
\begin{proof}
%The repetitions can't occur in the same permutation of the formation as the original since values only occur once in each permutation. 
The $\up(r,2)$ can be factored into six possibly empty words $w_1w_2w_3w_1w_2w_3$ so that $\pi_1$ contains $w_1w_2$, $\pi_2$ contains $w_3w_1$, and $\pi_3$ contains $w_2w_3$. The longest of $w_1,w_2,$ and $w_3$ contains the repeated sequence of a restricted $\up(\lceil r/3 \rceil,2),$ e.g., if $w_2$ is the longest, then there is a restricted $\up(|w_2|,2)$ contained in $[\pi_1,\pi_3]$ and $|w_2| \ge \lceil r/3 \rceil.$
\end{proof}

Next, we prove a lemma about longest common subsequences of permutations of Cartesian products which we will use with a product construction to prove the lower bound on $\fl(\up(r, 2))$.

\begin{lem}
\label{LCS}
Let $\pi_1$ and $\pi_2$ be permutations of the same size and $\sigma_1$ and $\sigma_2$ be permutations of the same size. Then $\LCS(\pi_1\otimes \sigma_1,\pi_2 \otimes \sigma_2) = \LCS(\pi_1,\pi_2) \LCS(\sigma_1,\sigma_2).$ 

\end{lem}

\begin{proof}
Let $n = \LCS(\pi_1,\pi_2), m=\LCS(\sigma_1,\sigma_2).$
Given a common subsequence $a_1,a_2,\ldots,a_n$ of $\pi_1$ and $\pi_2$, and a common subsequence $b_1,b_2,\ldots,b_m$ of $\sigma_1$ and $\sigma_2$, then a common subsequence of $\pi_1 \otimes \sigma_1$ and $\pi_2 \otimes \sigma_2$ is $$(a_1,b_1),(a_1,b_2), \ldots ,(a_1,b_m),(a_2,b_1) \ldots (a_2,b_m), \ldots (a_n,b_m).$$

Suppose we have a common subsequence of length $mn+1$, say $$(c_{1}, d_{1}), (c_{2}, d_{2}), \ldots, (c_{mn+1}, d_{mn+1}).$$ Suppose for $h=1,2,$ the locations of the sequence are $$(\ell_{h,1},k_{h,1}) < (\ell_{h,2},k_{h,2}) < \ldots < (\ell_{h,mn+1},k_{h,mn+1}) $$ in $\pi_h \otimes \sigma_h$, so $\pi_h \otimes \sigma_h (\ell_{h,i},k_{h,i}) = (c_i,d_i).$

By the lexicographic ordering, $\ell_{h,1} \le \ell_{h,2} \le \ldots \le \ell_{h,mn+1}$.
Since for all $h$ and $i$, $\pi_h(\ell_{h,i}) = c_i,$ repetitions of first coordinates of values occur precisely when the first coordinates of locations are repeated: $\ell_{h,i} = \ell_{h,j} \iff c_i = c_j.$ Since the first coordinates of locations are weakly increasing, repetitions are adjacent, so repeated values in the sequence $(c_i)$ are adjacent. The distinct elements of the sequences $(\ell_{1,i})$ and $(\ell_{2,i})$ are the locations of a common subsequence of $\pi_1$ and $\pi_2$. Since $\LCS(\pi_1,\pi_2)=n$, there can be at most $n$ distinct elements among these $mn+1$. By the pigeonhole principle, the sequence $c_{1}, c_{2}, \ldots, c_{{mn+1}}$ contains at least $m + 1$ repetitions of some value, which must be adjacent, say $c_t= \ldots =c_{t+m}$. Then the common subsequence contains $(c_t, d_t), (c_{t}, d_{t+1}), \ldots, (c_{t}, d_{t+m})$. Then $d_t, d_{t+1}, \ldots, d_{t+m}$ is a common subsequence of $\sigma_1$ and $\sigma_2$ of length $m+1$, contradicting the assumption that $\LCS(\sigma_1,\sigma_2)=m$.  
\end{proof}

We provide a product construction in the next lemma, which we will use to prove the claimed lower bound of $\fl(\up(r,2)) = \Omega(r^3).$

\begin{lem}
\label{product}
If we have permutations $\pi_1, \pi_2, \pi_3 \in S_k$ so $LCS(\pi_i,\pi_j) \le r$ for each $(i,j) \in \{(1,2), (1,3), (2,3)\}$, then the $(k^t,3)$-formation $[\pi_1^{\otimes t},\pi_2^{\otimes t},\pi_3^{\otimes t}]$ contains no restricted $\up(r^t+1,2)$, hence no $\up(3r^t+1,2)$. 

\end{lem}
\begin{proof}
The longest common subsequences of $\pi_i^{\otimes t}$ and $\pi_j^{\otimes t}$ have length at most $r^t$ so there is no restricted $\up(r^t+1,2)$ in any pair, hence not in the $(k^t,3)$-formation. An $\up(3 r^t+1,2)$ would imply there is a restricted $\up(r^t+1,2)$ so there are no $\up(3 r^t+1,2)$s.
\end{proof}

Finally, we are ready to show that the construction in the last lemma gives the desired lower bound on $\fl(\up(r,2))$.

\begin{thm}
\label{Omega}
There are $(\Omega(r^3),3)$-formations with no $\up(r,2).$
\end{thm}
\begin{proof}
Let $\pi_1 = e, \pi_2 = 43218765, \pi_3 = 65872143$. Then for each $(i,j) \in \{(1,2), (1,3), (2,3)\}$, $\LCS(\pi_i,\pi_j) = 2$. By Theorem \ref{product}, for each $t$, there are $(8^t,3)$-formations with no restricted $\up(2^t+1,2)$ hence no $\up(3\cdot 2^t+1,2)$. 

We can use these constructions for powers of two to build examples of cubic size when $r$ is not a power of $2$.
For any $r\ge 4$, there is a power of two $2^t$ in $\big((r-1)/6,(r-1)/3\big]$. Then there is a $(2^{3t},3)$-formation with no restricted $\up(2^t+1,2)$ hence no $\up(3 \cdot 2^t+1,2)$. Since $3 \cdot 2^t+1 \le r$ and $\lceil \frac{(r-1)^3}{216} \rceil \le 2^{3t},$ there is an $(\lceil \frac{(r-1)^3}{216} \rceil ,3)$-formation with no $\up(r,2).$

\end{proof}

\begin{cor}
$\fl(\up(r,2)) = \Theta(r^3)$.
\end{cor}

\section{Bounds for $\up(r,t)$}\label{uprtupper}

In this section, we show that for any fixed $t$, $\fl(\up(r, t))$ is $\theta(r^g)$ where $g={2t-1 \choose t},$ extending the result for $t=2.$ 

We start by proving an upper bound on $\fl(\up(r, t))$. Since any formation which contains $\Up(r,t)$ must also contain $\up(r,t)$, we focus on $\Up(r, t)$ instead of $\up(r,t)$ for the upper bound.

\begin{thm} 
\label{upper}
In any $((r-1)^{g}+1,2t-1)$ formation where $g = {2t-1 \choose t}$, there is an $\Up(r,t).$
\end{thm}

For $i,j \in \{0,...,t\}$ let $g_t(i,j) = {2t-i-j \choose t-i}$. This counts the number of lattice paths from $(i,j)$ to $(t,t)$ or the number of ways a best-of-$(2t-1)$ match can end starting from a score of $(i,j).$ If $\max(i,j) <t$ then $g_t(i,j) = g_t(i+1,j) + g_t(i,j+1).$

\begin{thm}
\label{gthm}
Let $i,j \in \{0,...,t\}.$ In any $((r-1)^{g_t(i,j)}+1,2t-1)$ formation starting with $i$ identity permutations and $j$ copies of $w = [n~(n-1)~...~2~1]$, there is an $\Up(r,t)$. 
\end{thm}
\begin{proof}
Induct backwards on $i + j$. The base case is when $\max(i, j) = t$, so $g_t(i,j)=1$ and the statement is trivially true here. 

Suppose it is true for larger values of $i + j$. Consider a $\left((r-1)^{g_t(i,j)}+1,2t-1\right)$ formation starting with $e^i w^j$. The size of each permutation is $pq+1$ where $p=(r-1)^{g_t(i+1,j)}$ and $q=(r-1)^{g_t(i,j+1)}$. Apply the Erd\H{o}s-Szekeres theorem to the next permutation after $e^i w^j$. In a permutation of size $pq+1=(r - 1)^{g_t(i,j)} + 1$, there is either an increasing subsequence of length $p+1 = (r - 1)^{g_t(i + 1, j)} + 1$ or a decreasing subsequence of length $q+1=(r - 1)^{g_t(i, j + 1)} + 1$. Restricting to the symbols of this monotone subsequence gives us one more identity or $w$ permutation. By the inductive hypothesis, there must be an $\Up(r,t)$ using just those symbols.
\end{proof}

\begin{proof}[Proof of Theorem \ref{upper}] 
Theorem \ref{gthm} with $i=0, j=0$ says that an unrestricted $((r-1)^{2t \choose t}+1,2t-1)$ formation has an $\Up(r,t)$ whose pattern is increasing or decreasing. However, if we don't restrict the pattern to be monotone, we can relabel the symbols so that the first permutation is the identity. So, any $\left((r-1)^{g_t(1,0)}+1,2t-1\right) = \left((r-1)^{2t-1 \choose t}+1,2t-1\right)$ formation contains an $\Up(r,t)$. 
\end{proof}

\begin{cor}\label{fluprtupper}
For all $r, t \ge 1$, we have $\fl(\up(r,t)) \le (r-1)^{{2t-1 \choose t}}+1$.
\end{cor}

By the sparsity lemma of Klazar in \cite{klazar1}, we obtain the following upper bound on $\Ex(\up(r, t),n)$.

\begin{thm}
Let $n, r, t \ge 1$ and $g = {2t-1 \choose t}$. We have 
$$\Ex(\up(r, t),n) \le (1+\Ex(\up(r,t),(r-1)^{g}))\F_{(r-1)^{g}+1, 2t-1}(n).$$
\end{thm}

In the remainder of this section, we prove that the bound in Corollary \ref{fluprtupper} is sharp up to a constant factor. In order to prove this, we first generalize the result in Lemma \ref{upr2lemrestrict}.

\begin{thm}
\label{upUp}
If there is an $\up(r,t)$ in an $(n,2t-1)$-formation, then there is an $\Up(\lceil \frac{r}{t(t-1)+1}\rceil,t)$ in the formation.
\end{thm}
\begin{proof}
For each symbol in the $\up(r,t)$, there is a subset of size $t$ of the $2t-1$ indices of the permutations containing the corresponding symbols in the $\up(r,t).$ These form a chain in the lattice of subsets of $\{1,2,\ldots,2t-1\}$ of size $t,$ where the longest chain has $t(t-1)+1$ elements. By the pigeonhole principle, at least one of the $t$-element subsets is repeated at least $\frac{r}{t(t-1)+1}$ times, which means there is an $\Up(\lceil \frac{r}{t(t-1)+1}\rceil,t).$
\end{proof}

Let $p,q$ be nonnegative integers. We will think of them in base $2$. Suppose that the value $a$ appears in $\tau_{p}$ and $\tau_{q}$, and suppose that the value $b$ appears in $\tau_{p}$ and $\tau_{q}$, with $a < b$. Also, let $i$ be the leftmost (most significant) index of a binary digit where $a$ and $b$ disagree in binary.

\begin{thm} \label{binaryAgree}
If $a$ appears before $b$ in both $\tau_{p}$ and $\tau_{q}$, or if $b$ appears before $a$ in both $\tau_{p}$ and $\tau_{q}$, then $p$ and $q$ agree in binary position $i$.
\end{thm}
\begin{proof}
We prove the contrapositive: if $p$ and $q$ disagree in binary position $i$, then $a$ and $b$ appear in different orders in $\tau_{p}$ and $\tau_{q}$. Since $a < b$, and $a$ and $b$ first differ in binary position $i$, $a$ has a $0$ in that position while $b$ has a $1$ there. Without loss of generality, suppose $p$ has a $0$ in the $i$th position while $q$ has a $1$ there. Then $p \oplus a < p \oplus b$ while $q \oplus a > q \oplus b.$ Hence, $a$ and $b$ occur in different orders in $\tau_p$ and $\tau_q$.
\end{proof}

Next we show that the length of the longest common subsequence of $\{\tau_{p_1}, ..., \tau_{p_k}\}$ can be determined from the number of bits where the binary expansions of $p_1,...,p_k$ all agree.

\begin{thm}\label{agree_binary}
If $s$ is the number of bits where the binary expansions of $p_1,...,p_k$ all agree, then the longest common subsequence of $\{\tau_{p_1}, ..., \tau_{p_k}\}$ has length $2^s$. 
\end{thm}
\begin{proof}
This follows from the pigeonhole principle. More than $2^s$ elements of a subsequence would mean some pair $a$ and $b$ would have to agree in all $s$ bits where $p_1, ..., p_k$ agree, so the highest bit where $a$ and $b$ disagree would have to be one where $p_1,...,p_k$ are not unanimous. By Theorem \ref{binaryAgree}, $a$ and $b$ would appear in a different order in some $p_i$ from some $p_j$, so they could not be in a common subsequence.
\end{proof}

For example, $\tau_{0110_2}, \tau_{0011_2}, \tau_{0010_2}$ share a common subsequence of length $2^2=4$ since $0110$, $0011$, and $0010$ agree in two positions $0*1*$.

$\tau_{0110_2} = 6~7~4~5~2~3~0~1~14~15~12~13~10~11~8~9$

$\tau_{0011_2} = 3~2~1~0~7~6~5~4~11~10~9~8~15~14~13~12$

$\tau_{0010_2} = 2~3~0~1~6~7~4~5~10~11~8~9~14~15~12~13$

Common subsequences of length $4$ include $6~4~10~8$ and $7~5~11~8$.

\begin{thm}
\label{tensor}
Let $(\pi_1, \ldots, \pi_u)$ be a formation containing an $\Up(r,t)$ but no $\Up(r+1,t).$ Similarly, let $(\sigma_1, \ldots, \sigma_u)$ be a formation containing an $\Up(s,t)$ but no $\Up(s+1,t).$ Then $(\pi_1 \otimes \sigma_1, \ldots, \pi_u \otimes \sigma_u)$ contains an $\Up(rs,t)$ but no $\Up(rs+1,t)).$ 

\end{thm}
\begin{proof}
It suffices to consider $u=t$. The proof is analogous to that of Lemma \ref{LCS}, which considers the case $t = 2$.
\end{proof}

The construction in the following result is suboptimal but potentially still of interest.

\begin{thm}
There are $(\Omega(r^7),7)$ formations hence also $(\Omega(r^7),5)$ formations with no $\up(r,3).$
\end{thm}
\begin{proof}
The lines of a Fano plane have the property that any $3$ either intersect in a point and cover all points, or they all miss a single point. Thus, $\tau_{1101000_2}$, $\tau_{0110100_2}$, $\tau_{0011010_2}$, $\tau_{0001101_2}$, $\tau_{1000110_2}$, $\tau_{0100011_2}$, $\tau_{1010001_2}$ have the property that any triple has longest common subsequence $2^1=2$. 
By theorem \ref{tensor}, there is a $(128^k,7)$ formation with no $\Up(2^k+1,3).$ 
\end{proof}

The bound in the next theorem is sharp and better than the Fano construction since it, for example, yields a $\left(2^{10},5\right)$ formation with no $\Up(3,3).$

\begin{thm}
\label{uprtconstruction}
For all fixed $t \ge 1$, there are $\left(2^{2t-1 \choose t},2t-1\right)$ formations with no $\Up(3,t).$
\end{thm}

\begin{proof}
Let $S$ be the set of subsets of $\{1,2,...,2t-1\}$ of size $t$, so $|S|= {2t-1 \choose t}$. Define sets $S_1, S_2, ..., S_{2t-1}$ so that $S_i$ contains the subsets containing $i$. Then any $t$ sets $S_{i_1}, ... S_{i_t}$ will only agree on one element of $S$, the set $\{i_1,...,i_t\}$.

Identify the ${2t-1 \choose t}$ subsets with the numbers $\{0,1,...,{2t-1 \choose t}-1\}$. Let $n_i$ be $\sum_{j\in S_i} 2^j$. Then $[\tau_{n_1}, ..., \tau_{n_{2t-1}}]$ is a $\left(2^{2t-1\choose t},2t-1\right)$ formation with no $\Up(3,t)$ by Theorem~\ref{agree_binary}.
\end{proof}

The lower bound and upper bound meet, so these are the best possible for some particular values of $r$. For example, let $g={2t-1 \choose t}.$ We constructed a $\left(2^g,2t-1\right)$ formation with no $\Up(3,t)$ but every $\left(2^g+1,2t-1\right)$ formation contains an $\Up(3,t).$

\begin{thm}
For all fixed $t \ge 1$, there are $\left(\Omega(r^{2t-1 \choose t}),2t-1\right)$ formations with no $\up(r,t).$
\end{thm}
\begin{proof}
By theorem \ref{tensor} applied to the construction of theorem \ref{uprtconstruction}, there are $\left(2^{{2t-1 \choose t}k},2t-1\right)$-formations with no $\Up(2^k+1,t).$ 

To avoid an $\up(r,t)$, let $2^k$ be the greatest power of $2$ up to $\lceil \frac{r}{t^2-t+1}\rceil-1$. There is a $\left(2^{{2t-1 \choose t}k},2t-1\right)$ formation with no $\Up(2^k+1,t)$ hence no $\up((2^k+1)(t^2-t+1),t)$ hence no $\up(r,t).$ For a fixed $t$, $(2^k)^{2t-1 \choose t}$ is $\Omega\left(r^{2t-1 \choose t}\right).$
\end{proof}

\begin{cor}
For all fixed $t \ge 1$, we have $\fl(\up(r,t)) = \Theta(r^{2t-1 \choose t})$, where the constants in the bound depend only on $t$.
\end{cor}

\section{Sharp bounds using formation width}\label{fws}

The next theorem confirms a conjecture from \cite{gpt} that  $\fw(abc(acb)^t a b c) = 2t+3$ for all $t \geq 0$, where $\fw(u)$ denotes the minimum $s$ for which there exists $r$ such that every $(r, s)$-formation contains $u$. This gives an upper bound of $\Ex(a b c (a c b)^{t} a b c, n) = O(\F_{\fl(abc(acb)^t a b c), 2t+3}(n)) \leq n 2^{\frac{1}{t!}\alpha(n)^{t} + O(\alpha(n)^{t-1})}$ for $t \geq 1$ \cite{gpt, niv}. The sequence $abc(acb)^t a b c$ contains $(ab)^{t+2}$, so there was already a lower bound of  $\Ex(a b c (a c b)^{t} a b c, n)  = \Omega( \Ex((ab)^{t+2}, n)) \geq n 2^{\frac{1}{t!}\alpha(n)^{t} - O(\alpha(n)^{t-1})}$ for $t \geq 1$, where we are using Klazar's sparsity result \cite{klazar1} in the first inequality. The next result uses the fact proved in \cite{gpt} that $\fw(u)$ is the minimum $s$ for which every binary $(r, s)$-formation contains $u$, where $r$ is the number of distinct letters in $u$. Also in the next two results, we use the terminology $u$ \emph{has} $v$ to mean that some subsequence of $u$ is an exact copy of $v$, so $u$ \emph{has} $v$ is stronger than $u$ \emph{contains} $v$.

\begin{thm}
For all $t \geq 0$, $\fw(abc(acb)^t a b c) = 2t+3$. 
\end{thm}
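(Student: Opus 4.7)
The plan is to prove $\fw(u) = 2t+3$ for $u = abc(acb)^t abc$ by establishing matching bounds, using the characterization from \cite{gpt} that $\fw(u)$ equals the minimum $s$ for which every binary $(3, s)$-formation contains $u$ up to isomorphism.

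For the lower bound $\fw(u) \geq 2t+3$, I would exhibit the binary $(3, 2t+2)$-formation $F_0 := (ID)^{t+1}$ with $I = 123$ and $D = 321$, and show it avoids $u$. For each pair $\{x, y\} \subseteq \{1,2,3\}$ with $x < y$, projecting $F_0$ to $\{x, y\}$ gives the sequence $(xyyx)^{t+1}$, whose maximal runs number $2t+3$; hence its longest alternating subsequence has length only $2t+3$, and so neither $(xy)^{t+2}$ nor $(yx)^{t+2}$ occurs in $F_0$. But the restriction of $u$ to its letters $\{a, b\}$ is $(ab)^{t+2}$, so any isomorphic copy of $u$ inside a formation forces an alternating subsequence of length $2t+4$ on the image pair. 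Since $F_0$ affords no such alternation on any pair, it avoids $u$, which gives $\fw(u) \geq 2t+3$.

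For the upper bound $\fw(u) \leq 2t+3$, I would show that every binary $(3, 2t+3)$-formation $F = B_1 \dots B_{2t+3}$ contains $u$ up to isomorphism. By pigeonhole at least $t+2$ blocks are of a single type; since renaming $1 \leftrightarrow 3$ in $F$ swaps the $I$- and $D$-blocks while the corresponding swap $a \leftrightarrow c$ in $u$ leaves $u$ isomorphic to itself, I may assume at least $t+2$ of the $B_i$ are $I$-blocks. Choosing the labeling $(a,b,c) = (1,2,3)$, so that $u$ becomes $123(132)^t 123$, the plan is to use the first $I$-block for the initial $123$, the last $I$-block for the terminal $123$, and extract $(132)^t$ from the intervening blocks via the following key observation: from an $I$-block $B_\alpha$ together with the next block $B_{\alpha+1}$ one extracts a $132$ at positions $(3\alpha - 2,\ 3\alpha,\ 3\alpha + 2)$, since position $3\alpha + 2$ always carries a $2$ regardless of whether $B_{\alpha+1}$ is $I$ or $D$. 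Chaining $t$ such extractions requires middle $I$-blocks $\alpha_1 < \dots < \alpha_t$ spaced with $\alpha_{j+1} \geq \alpha_j + 2$ and strictly before the last $I$-block.

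The main obstacle is configurations where this well-spaced chain of middle $I$-blocks is unavailable, or where $F$ has no $I$-block near its end to host the terminal $123$. To handle the first difficulty, I would supplement $132$-copies from $D$-runs by exploiting that $(321)^k$ contains $(132)^{k-1}$ as a subsequence (via a greedy boundary extraction spanning consecutive $D$-blocks). For the second, I would place the terminal $123$ inside a long trailing $D$-run, using the fact that $(321)^3$ contains $123$ as a subsequence. A finite case analysis on the block-type run-structure of $F$, combined when necessary with an alternative choice among the six labelings of $(a, b, c)$ to $\{1,2,3\}$, should close the remaining cases.
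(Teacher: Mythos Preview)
Your lower bound argument via the formation $(ID)^{t+1}$ is correct and more explicit than the paper, which simply invokes the alternation $(ab)^{t+2}$ inside $u$. Your upper bound starts from the same reduction to binary $(3,2t+3)$-formations, but the execution diverges from the paper in a way that leaves a real gap: everything after your ``key observation'' is deferred to an unspecified case analysis (``should close the remaining cases''), and that case analysis is precisely the content of the theorem. As you yourself note, your primary extraction scheme fails whenever the $t$ middle $I$-blocks cannot be chosen with pairwise gaps $\ge 2$ and with the last one at distance $\ge 2$ from the terminal $I$-block; your proposed patches ($D$-runs supplying extra $132$'s, $(321)^3$ supplying a terminal $123$, and switching among the six labelings) are each correct local observations, but you have not shown they always combine to cover every block-type word on $\{I,D\}^{2t+3}$. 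For instance, for $F = I^{t+2}D^{t+1}$ with $t=1$ neither your primary scheme nor the $(321)^3$ trick applies, and one must change labeling; you have not argued that some labeling always works.

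The paper avoids this combinatorial bookkeeping by a single orientation trick: rather than aligning the \emph{bookend} copies of $abc$ with the majority permutation, it aligns the \emph{repeated middle} $(acb)^t$ with it. Concretely, by pigeonhole the central $2t-1$ blocks $p_3,\ldots,p_{2t+1}$ contain $(xyz)^t$; targeting the isomorphic pattern $xzy\,(xyz)^t\,xzy$ then reduces the problem to finding $xzy$ inside each two-block end $p_1p_2$ and $p_{2t+2}p_{2t+3}$. A two-block string fails to contain $xzy$ only when it equals $zyx\,xyz$, so there are at most two bad ends to analyse, and the residual cases close in a handful of lines. Your choice to put the majority type on the ends and the twisted copy in the middle is what generates the spacing constraints and the proliferation of cases; reorienting as the paper does would turn your sketch into a complete proof with almost no casework.
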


\begin{proof}
The proof is trivial for $t = 0$, so suppose $t > 0$. It suffices by \cite{gpt} to show that every binary $(3, 2t+3)$-formation contains $u$. Consider any binary $(3, 2t+3)$-formation $f$ with permutations $x y z$ and $z y x$. Without loss of generality suppose permutations $3$ through $2t+1$ of $f$ have $(x y z)^{t}$. Then $f$ has $x z y (x y z)^{t} x z y$ unless the first six letters of $f$ are $z y x x y z$ or the last six letters of $f$ are $z y x x y z$. 

If the first six letters of $f$ are $z y x x y z$, then $f$ has $z y x (z x y)^{t} z y x$. So we assume that the last six letters of $f$ are $z y x x y z$. Now if the first six letters of $f$ are $z y x x y z$ or $z y x z y x$, then $f$ has $z y x (z x y)^t z y x$. Otherwise if the first six letters of $f$ are $x y z x y z$ or $x y z z y x$, then first note that $f$ has $x z y (x y z)^t x z y$ if the $3^{rd}$ through $(2t+1)^{st}$ permutations of $f$ have $(x y z)^{t+1}$. Otherwise the $3^{rd}$ through $(2t+1)^{st}$ permutations of $f$ have $(z y x)^{t-1}$, in which case $f$ has $x y z (x z y)^t x y z$.
\end{proof}

\begin{cor}
$\Ex(a b c (a c b)^{t} a b c, n) = n 2^{\frac{1}{t!}\alpha(n)^{t} \pm O(\alpha(n)^{t-1})}$ for $t \geq 1$.
\end{cor}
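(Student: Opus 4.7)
The plan is to derive both bounds by combining the previous theorem with results already cited in the paper, so the corollary is essentially a bookkeeping exercise rather than a new computation.

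For the upper bound, I would apply the general inequality $\Ex(u, n) = O(\F_{\fl(u), \fw(u)}(n))$ noted in the introduction (which rests on Klazar's sparsity result) to the sequence $u = abc(acb)^t abc$. By the previous theorem, $\fw(u) = 2t+3$, so $\Ex(u, n) = O(\F_{r, 2t+3}(n))$ for $r = \fl(u)$. Then I would invoke the tight bounds of Nivasch and Pettie quoted at the start of the introduction: for $t = 1$ this gives $\F_{r, 5}(n) = \Theta(n 2^{\alpha(n)})$, and for $t \geq 2$, setting $s = 2t+2 \geq 6$ (so that $s+1 = 2t+3$ and $(s-2)/2 = t$) gives $\F_{r, 2t+3}(n) = n 2^{\frac{1}{t!}\alpha(n)^{t} \pm O(\alpha(n)^{t-1})}$. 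This yields the claimed upper bound.

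For the lower bound, the key observation is that restricting $abc(acb)^t abc$ to the letters $\{a, b\}$ produces $ab(ab)^t ab = (ab)^{t+2}$, so any sequence containing $abc(acb)^t abc$ also contains $(ab)^{t+2}$; equivalently, any sequence avoiding $(ab)^{t+2}$ avoids $abc(acb)^t abc$. I would take an extremal $2$-sparse sequence of length $n 2^{\frac{1}{t!}\alpha(n)^{t} - O(\alpha(n)^{t-1})}$ avoiding $(ab)^{t+2}$ (from the classical Davenport--Schinzel bounds $\Ex(a_{s+2}, n)$ for alternations with $s = 2t+2$), and use Klazar's sparsity result to pass from $2$-sparsity to $3$-sparsity at the cost of only a constant factor, matching the number of distinct letters used in $abc(acb)^t abc$.

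The final step is simply to combine the matching upper and lower bounds, both of which have the same doubly exponential form $n 2^{\frac{1}{t!}\alpha(n)^{t} \pm O(\alpha(n)^{t-1})}$. There is no significant obstacle here: the real content is already in the previous theorem establishing $\fw(abc(acb)^t abc) = 2t+3$, and the only point requiring any care is making sure the lower-bound construction can be made $3$-sparse, which is exactly what Klazar's sparsity lemma delivers.
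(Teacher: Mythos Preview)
Your proposal is correct and matches the paper's own argument essentially verbatim: the paper derives the upper bound from $\fw(abc(acb)^t abc)=2t+3$ via $\Ex(u,n)=O(\F_{\fl(u),\fw(u)}(n))$ and the Nivasch--Pettie bounds, and the lower bound from the containment $(ab)^{t+2}\subseteq abc(acb)^t abc$ together with Klazar's sparsity lemma. Your slightly more explicit treatment of the $t=1$ case and of the $2$-sparse to $3$-sparse passage is fine and does not depart from the paper's approach.
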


Next we prove that $\fw(abcacb(abc)^{t}acb) =2t+5$, which implies that $\Ex(a b c a c b (a b c)^{t} a c b, n) = n 2^{\frac{1}{(t+1)!}\alpha(n)^{t+1} \pm O(\alpha(n)^{t})}$ for $t \geq 1$.

%%%%%%%%%%%%%%%%%%%%%%%%%%%%%%%%%%%%%%%%
\begin{lem}
For all $t \geq 0$, $\fw(abcacb(abc)^{t}acb) =2t+5$.
\end{lem}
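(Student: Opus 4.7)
The plan is to handle the two directions of the equality $\fw(abcacb(abc)^t acb) = 2t+5$ separately, with the lower bound following from a simple containment argument and the upper bound mirroring the structure of the previous lemma.

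For the lower bound, I would observe that deleting every $c$ from $abcacb(abc)^t acb$ leaves $abab(ab)^t ab = (ab)^{t+3} = a_{2t+6}$, so the pattern contains $a_{2t+6}$ as a subsequence. Monotonicity of $\fw$ under subsequence containment (if $u$ contains $v$ then $\fw(u) \geq \fw(v)$), combined with the standard bound $\fw(a_{2t+6}) = 2t+5$, yields $\fw(abcacb(abc)^t acb) \geq 2t+5$.

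For the upper bound, by the reduction in \cite{gpt} it suffices to show that every binary $(3, 2t+5)$-formation contains the pattern. Let $f$ be such a formation with permutations $xyz$ and $zyx$. First I would apply pigeonhole to the $2t+1$ middle permutations of $f$ (positions $3$ through $2t+3$) and, after possibly swapping $x \leftrightarrow z$, assume that at least $t+1$ of these permutations are $xyz$, so the middle contains $(xyz)^{t+1}$ as a subsequence. Targeting the mapping $(a,b,c) = (x,y,z)$, I would then attempt to read $xyz \cdot xzy \cdot (xyz)^t \cdot xzy$ from $f$: the $t+1$ middle $xyz$'s supply the initial $xyz$-block and the $(xyz)^t$-block, while the two $xzy$-blocks are assembled from the first two and last two permutations of $f$ together with adjacent material. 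The key sub-observation, identical to the previous lemma, is that a pair of consecutive permutations of $f$ admits an $xzy$-subsequence unless it is $zyx \cdot xyz$. When the first six or last six letters of $f$ are $zyx \cdot xyz$, I would switch to an alternative mapping (for instance, targeting $zyx \cdot zxy \cdot (zyx)^t \cdot zxy$ under $(a,b,c) = (z,y,x)$ when the first six letters are obstructed) and follow the same sub-case splitting as in the previous proof, re-examining the middle's type distribution to extract whichever of $(xyz)^{t+1}$ or $(zyx)^{t}$ is required.

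The main obstacle is the asymmetry of our pattern: $abc \cdot acb$ appears at the left end while only $acb$ appears at the right, unlike the symmetric $abc \cdots abc$ of the previous lemma. This breaks the left--right symmetry exploited there and produces a richer collection of boundary configurations on the left. The one extra $xyz$ supplied by pigeonhole in the middle (giving $(xyz)^{t+1}$ rather than only $(xyz)^t$) is precisely the slack I would rely on to absorb the additional left-end block, so that the same repertoire of remappings and case splits as in the previous proof carries through to cover every configuration.
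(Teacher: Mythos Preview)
Your lower bound is fine and matches the paper exactly.

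The upper-bound plan, however, has a genuine ordering error. You propose to take the ``initial $xyz$-block'' from the $t+1$ copies of $xyz$ sitting in the middle segment $p_3\ldots p_{2t+3}$, while assembling the first $xzy$-block from the first two permutations $p_1p_2$ (plus adjacent material). But in the target $xyz\cdot xzy\cdot (xyz)^t\cdot xzy$ the initial $xyz$ must \emph{precede} the first $xzy$; anything drawn from positions $3$ and beyond cannot precede letters drawn from $p_1p_2$. So as stated, the first $xzy$ has nowhere legal to live. Note also that $p_1p_2$ alone can never contain the six-letter prefix $xyz\,xzy$, since $p_1,p_2\in\{xyz,zyx\}$ and the prefix has length $6$.

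The paper handles exactly this asymmetry by choosing a $4+(2t-1)+2$ split rather than your $2+(2t+1)+2$: it applies pigeonhole only to $p_5\ldots p_{2t+3}$ (obtaining $(xyz)^t$, not $(xyz)^{t+1}$) and reserves the first \emph{four} permutations to manufacture the six-letter prefix $xyz\,xzy$. It then checks which quadruples $p_1p_2p_3p_4$ fail to contain $xyz\,xzy$ and which pairs $p_{2t+4}p_{2t+5}$ fail to contain $xzy$, and disposes of the finitely many bad combinations one by one, sometimes invoking the complementary run $(zyx)^{t-1}$ in the middle. Your intuition that ``one extra $xyz$ of slack'' in the middle compensates for the longer left end is exactly backwards: the extra material needs to sit on the \emph{left boundary}, before the middle, not inside it.
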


\begin{proof}
First note that $abcacb(abc)^{t}acb$ has an alternation of length $2\mathrm{t}+6$, so $\fw(abcacb(abc)^{t}acb) \geq 2t+5$. To check the upper bound it suffices to show that every binary $(3,\ 2\mathrm{t}+5)$-formation contains $abcacb(abc)^{t}acb$ \cite{gpt}. We will denote an arbitrary binary $(3,\ 2\mathrm{t}+5)$-formation with permutations $x y z$ or $z y x$ by $p_{1}p_{2}\ldots p_{2t+5}.$ Without loss of generality, assume that $p_{5}p_{6}\ldots p_{2t+3}$ has $(xyz)^{t}$. If $p_{5}p_{6}\ldots p_{2t+3}$ has $(xyz)^{t+1}$ and $p_{1} =zyx$, then $p_{1}$ has $zx$, $p_{2}$ has $y$, $p_{3}p_{4}$ has $zy$, $p_{5}p_{6}\ldots p_{2t+3}$ has $xy(zxy)^{t}z$, and $p_{2t+4}p_{2t+5}$ has $yx$. Thus we have $zxyzyx(zxy)^{t}zyx$. If $p_{5}p_{6}\ldots p_{2t+3}$ has $(xyz)^{t+1}$ and $p_{1} = xyz$ then note that we can choose $xzy$ from $p_{2}p_{3}p_{4}$, $(xyz)^{t}xyz$ from $p_{5}p_{6}\ldots p_{2t+3}$, and $y$ from $p_{2t+4}$.

Now suppose that $p_{5}p_{6}\ldots p_{2t+3}$ has both $(xyz)^{t}$ and $(zyx)^{t-1}$. Note that if $p_{1}p_{2}p_{3}p_{4}$ has $xyzxzy$ and $p_{2t+4}p_{2t+5}$ has $xzy$ then $p_{1}p_{2}\ldots p_{2t+5}$ has $xyzxzy(xyz)^{t}xzy.$
It can be easily checked that $p_{1}p_{2}p_{3}p_{4}$ does not have $xyzxzy$ or $p_{2t+4}p_{2t+5}$ does not have $xzy$ exactly when $p_{2t+4}p_{2t+5} = zyxxyz$ or $p_{1}p_{2}p_{3}p_{4}$ $ =(zyx)(xyz)(zyx)(xyz), $ $(zyx)(zyx)(xyz)(zyx), $ $ (zyx)(zyx)(xyz)(xyz)$, or $(zyx)(zyx)(zyx)(xyz)$.

\noindent \textbf{Suppose that $p_{2t+4}p_{2t+5} = zyxxyz$ but $p_{1}p_{2}p_{3}p_{4}$ $ \not \in $ $ (zyx)(xyz)(zyx)(xyz), $ $(zyx)(zyx)(xyz)(zyx), $ $(zyx)(zyx)(xyz)(xyz),$ $(zyx)(zyx)(zyx)(xyz)$.}

If $p_{3} =zyx$ and $p_{1}p_{2} \neq xyzzyx$: Then $p_{1}p_{2}$ has {\it zxy}, $p_{3}$ has {\it zyx}, $p_{4}$ has the letter $z, p_{5}p_{6}\ldots p_{2t+3}$ has $xy(zxy)^{t-1}z$ and $p_{2t+4}p_{2t+5} = zyxxyz$. Thus we have $zxyzyx(zxy)^{t}zyx$.

If $p_{3} =xyz$ and $p_{1}p_{2} \neq zyxxyz$: Then $p_{1}p_{2}$ has $xzy$, $p_{3}$ has $xyz$, $p_{4}$ has the letter $x$, $p_{5}p_{6}\ldots p_{2t+3}$ has $zy(xzy)^{t-2}x$ and $p_{2t+4}p_{2t+5} = zyxxyz$. Thus we have $xzyxyz(xzy)^{t}xyz$.

If $p_{3} =zyx$ and $p_{1}p_{2} = xyzzyx$ and $p_{4} =xyz:$ Then $p_{1}p_{2}p_{3}p_{4}$ has $(xyz)(xzy)(xyz),$ $p_5 p_{6}\ldots p_{2t+3}$ has $(xyz)^{t-1}x$ and $p_{2t+4}p_{2t+5}= zyxxyz$. Thus we have $xyzxzy(xyz)^{t}xzy$.

If $p_{3} =zyx$ and $p_{1}p_{2} = xyzzyx$ and $p_{4} =zyx:$ Then $p_{1}p_{2}p_{3}p_{4}$ has $(xzy)(xyz)x,$ $p_5 p_{6}\ldots p_{2t+3}$ has $zy(xzy)^{t-2}x$ and $p_{2t+4}p_{2t+5}= zyxxyz$. Thus we have $xzyxyz(xzy)^{t}xyz$.

If $p_{3} =xyz$ and $p_{1}p_{2} = zyxxyz$ and $p_{4} =zyx:$ Then $p_{1}p_{2}p_{3}p_{4}$ has $(zyx)(zxy)(zyx),$ $p_5 p_{6}\ldots p_{2t+3}$ has $(zyx)^{t-1}$ and $p_{2t+4}p_{2t+5}=zyxxyz$. Thus we have $zyxzxy(zyx)^{t}zxy$.

If $p_{3} = xyz$ and $p_{1}p_{2} = zyxxyz$ and $p_{4} = xyz$: Then $p_{1}p_{2}p_{3}p_{4}$ has $(zxy)(zyx)z$, $p_5 p_{6}\ldots p_{2t+3}$ has $xy(zxy)^{t-1}z$ and $p_{2t+4}p_{2t+5}= zyxxyz$. Thus we have $zxyzyx(zxy)^{t}zyx$.

\noindent \textbf{Next, suppose that $p_{1}p_{2}p_{3}p_{4}\in (zyx)(xyz)(zyx)(xyz), (zyx)(zyx)(xyz)(zyx), (zyx)(zyx)(xyz)(xyz)$, $(zyx)(zyx)(zyx)(xyz)$.}

If $p_{1}p_{2}p_{3}p_{4} = (zyx)(zyx)(zyx)(xyz)$ or $p_{1}p_{2}p_{3}p_{4} = (zyx)(xyz)(zyx)(xyz)$: then $p_{1}p_{2}p_{3}p_{4}$ has $(zxy)(zyx)z$ and $p_{5}p_{6}\ldots p_{2t+3}$ has $xy(zxy)^{t-1}z$ and $p_{2t+4}p_{2t+5}$ has $yx$. Thus in this case we have $zxyzyx(zxy)^{t}zyx.$

If $p_{1}p_{2}p_{3}p_{4}=(zyx)(zyx)(xyz)(zyx)$ and $p_{2t+4}p_{2t+5}\neq xyzzyx$: then $p_{1}p_{2}p_{3}p_{4}$ has $(zyx)(zxy)(zyx)$ and $p_{5}p_{6}\ldots p_{2t+3}$ has $(zyx)^{t-1}$ and $p_{2t+4}p_{2t+5}$ has $zxy$. Thus in this case we have $zyxzxy(zyx)^{t}zxy.$

If $p_{1}p_{2}p_{3}p_{4}=(zyx)(zyx)(xyz)(zyx)$ and $p_{2t+4}p_{2t+5}=xyzzyx$: then $p_{1}p_{2}p_{3}p_{4}$ has $zxyzyx$ and $p_{5}p_{6}\ldots p_{2t+3}$ has $(zxy)^{t-1}z$ and $p_{2t+4}p_{2t+5}$ has $xyzyx$. Thus in this case we have $zxyzyx(zxy)^{t}zyx.$

If $p_{1}p_{2}p_{3}p_{4}=(zyx)(zyx)(xyz)(xyz)$ and $p_{5}=xyz$ and $p_{2t+4}p_{2t+5}\neq xyzzyx$: then $p_{1}p_{2}p_{3}p_{4}p_{5}$ has $(zyx)( zxy)( zyx)$ and $p_{6}p_{7}\ldots p_{2t+3}$ has $(zyx)^{t-1}$ and $p_{2t+4}p_{2t+5}$ has $zxy$. Thus in this case we have $zyxzxy(zyx)^{t}zxy.$

If $p_{1}p_{2}p_{3}p_{4}=(zyx)(zyx)(xyz)(xyz)$ and $p_{5}=xyz$ and $p_{2t+4}p_{2t+5}=xyzzyx$: then $p_{1}p_{2}p_{3}p_{4}p_{5}$ has $(zxy)(zyx)z$ and $p_{6}p_{7}\ldots p_{2t+3}$ has $(xyz)^{t-1}$ and $p_{2t+4}p_{2t+5}$ has $xyzyx$. Thus in this case we have $zxyzyx(zxy)^{t}zyx.$

If $p_{1}p_{2}p_{3}p_{4}=(zyx)(zyx)(xyz)(xyz)$ and $p_{5}=zyx$ and $p_{2t+4}p_{2t+5}\neq zyxxyz$: then $p_{1}p_{2}p_{3}p_{4}p_{5}$ has $xyzxzy$ and $p_{6}p_{7}\ldots p_{2t+3}$ has $(xyz)^{t}$ and $p_{2t+4}p_{2t+5}$ has $xzy$. Thus in this case we have $xyzxzy(xyz)^{t}xzy.$

If $p_{1}p_{2}p_{3}p_{4}=(zyx)(zyx)(xyz)(xyz)$ and $p_{5}=zyx$ and $p_{2t+4}p_{2t+5}=zyxxyz$: then $p_{1}p_{2}p_{3}p_{4}p_{5}$ has $(xzy)(xyz)(xzy)x$ and $p_{6}p_{7}\ldots p_{2t+3}$ has $zy(xzy)^{t-3}x$ and $p_{2t+4}p_{2t+5}$ has $zyxyz$. Thus in this case we have $xzyxyz(xzy)^{t}xyz.$
\end{proof}

\begin{cor}
$\Ex(a b c a c b (a b c)^{t} a c b, n) = n 2^{\frac{1}{(t+1)!}\alpha(n)^{t+1} \pm O(\alpha(n)^{t})}$ for $t \geq 1$.
\end{cor}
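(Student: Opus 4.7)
The plan is to derive matching upper and lower bounds that both equal $n 2^{\frac{1}{(t+1)!}\alpha(n)^{t+1} \pm O(\alpha(n)^{t})}$, combining the formation-width computation from the preceding lemma with the known tight bounds on formation-free and alternation-free sequences from Nivasch and Pettie.

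For the upper bound, I would invoke the relation $\Ex(u,n) = O(\F_{\fl(u), \fw(u)}(n))$ from \cite{gpt}, applied with $u = abcacb(abc)^{t}acb$. By the preceding lemma, $\fw(u) = 2t+5$, so $\Ex(u, n) = O(\F_{\fl(u),\, 2t+5}(n))$. Substituting $s = 2t+4$ into Nivasch's and Pettie's tight bound $\F_{r, s+1}(n) = n 2^{\frac{\alpha^{t'}(n)}{t'!} \pm O(\alpha(n)^{t'-1})}$ for even $s \geq 6$ with $t' = (s-2)/2 = t+1$, we get $\F_{\fl(u),\, 2t+5}(n) \leq n 2^{\frac{1}{(t+1)!}\alpha(n)^{t+1} + O(\alpha(n)^{t})}$, which gives the desired upper bound.

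For the lower bound, the key observation is that $abcacb(abc)^{t}acb$ contains the alternation $(ab)^{t+3}$ of length $2t+6$: restricting to the letters $a$ and $b$ yields $abab(ab)^{t}ab = (ab)^{t+3}$. Hence every sequence that avoids $(ab)^{t+3}$ also avoids $abcacb(abc)^{t}acb$, so $\Ex(abcacb(abc)^{t}acb, n) = \Omega(\Ex((ab)^{t+3}, n))$, using Klazar's sparsity result \cite{klazar1} to reconcile the fact that $\Ex$ is defined with $3$-sparsity on the left and $2$-sparsity on the right (the bound only changes by a constant factor). Nivasch's lower bound $\Ex(a_{2t+6}, n) \geq n 2^{\frac{1}{(t+1)!}\alpha(n)^{t+1} - O(\alpha(n)^{t})}$ (applying the alternation formula with $s+2 = 2t+6$, so $\lfloor (s-2)/2 \rfloor = t+1$) then furnishes the matching lower bound.

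Since both ingredients (the lemma and the cited Nivasch/Pettie bounds) are already in place, there is no real obstacle here; the only thing to verify carefully is the parameter bookkeeping, namely that $\fw(u) = 2t+5$ slots into the formation upper bound at exponent $\frac{\alpha^{t+1}(n)}{(t+1)!}$, and that the alternation of length $2t+6$ embedded in $u$ yields the same exponent on the lower bound side. The matching constants $1/(t+1)!$ in both directions then close the proof.
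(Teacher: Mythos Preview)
Your argument is correct and matches the paper's approach: the paper derives the corollary exactly as you do, combining $\fw(u)=2t+5$ with the Nivasch--Pettie formation bounds for the upper bound and the embedded alternation $(ab)^{t+3}$ (via Klazar's sparsity result) for the lower bound. Your parameter bookkeeping is right in both directions.
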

%%%%%%%%%%%%%%%%%%%%%%%%%%%%%%%%%%%%%%%%%%%%%%%%%%%%%%%

\section{Exact values}\label{exact}

Klazar \cite{klazar1}, Nivasch \cite{niv}, and Pettie \cite{pettie} showed that $\F_{r, 2}(n) < r n$, $\F_{r, 3}(n) < 2 r n$, $\F_{r, 2}(n, m) < n + (r-1)m$, and $\F_{r, 3}(n, m) < 2 n+(r-1)m$. In this section, we provide several elementary proofs to obtain exact values for all of the extremal functions in the previous sentence. In particular, we show that $\F_{r,2}(n, m) = n+(r-1)(m-1)$, $\F_{r,3}(n, m) = 2n+(r-1)(m-2)$, $\F_{r,2}(n) = (n-r)r+2r-1$, and $\F_{r,3}(n) = 2(n-r)r+3r-1$. We assume that $n \geq r$ for all of the results in this section.

\begin{thm}\label{exactfr}
For all integers $m \ge 1, n \ge r$ we have
\begin{enumerate}
\item \label{exactfr1} $\F_{r,2}(n, m) = n+(r-1)(m-1)$
\item \label{exactfr2} $\F_{r,3}(n, m) = 2n+(r-1)(m-2)$
\item $\F_{r,2}(n) = (n-r)r+2r-1$
\item $\F_{r,3}(n) = 2(n-r)r+3r-1$
\end{enumerate}
\end{thm}

\begin{proof}
We prove a matching upper bound and lower bound for each part.
\begin{enumerate}
\item Suppose that $u$ is a sequence on $m$ blocks with $n$ distinct letters that avoids $\mathcal{F}_{r,2}$. Delete the first occurrence of every letter in $u$. This empties the first block, leaving a sequence with at most $m-1$ nonempty blocks that must have at most $r-1$ letters per block, or else $u$ would have contained a pattern of $\mathcal{F}_{r,2}$. Thus $u$ has length at most $n+(r-1)(m-1)$, giving the upper bound.

For the lower bound, consider the sequence obtained from concatenating $\up(n, 1)$ with $\up(r-1,m-1)$. This sequence has $n$ distinct letters, $m$ blocks, and clearly avoids $\mathcal{F}_{r,2}$.

\item Suppose that $u$ is a sequence on $m$ blocks with $n$ distinct letters that avoids $\mathcal{F}_{r,3}$. Delete the first occurrence of every letter in $u$, as well as the last occurrence. This empties the first and last blocks, leaving a sequence with at most $m-2$ nonempty blocks that must have at most $r-1$ letters per block, or else $u$ would have contained a pattern of $\mathcal{F}_{r,3}$. Thus $u$ has length at most $2n+(r-1)(m-2)$, giving the upper bound.

For the lower bound, consider the sequence obtained from concatenating $\up(n, 1)$, $\up(r-1,m-2)$, and $\up(n, 1)$ again. This sequence has $n$ distinct letters, $m$ blocks, and clearly avoids $\mathcal{F}_{r,3}$.

\item Suppose that $u$ is an $r$-sparse sequence with $n$ distinct letters that avoids $\mathcal{F}_{r,2}$. Partition $u$ into blocks of size $r$, except for the last block which may have size at most $r$. Every block of length $r$ must have the first occurrence of some letter (or else $u$ would contain a pattern in $\mathcal{F}_{r,2}$), and the first block has $r$ first occurrences. This gives the upper bound.

For the lower bound, consider the sequence obtained by starting with $\up(r-1, 1)$ and concatenating $a_x \up(r-1, 1)$ to the end for $x = r, \dots, n$. This sequence has length $(n-r)r+2r-1$, it is $r$-sparse, and clearly avoids $\mathcal{F}_{r,2}$.

\item Suppose that $u$ is an $r$-sparse sequence with $n$ distinct letters that avoids $\mathcal{F}_{r,3}$. Partition $u$ into blocks of size $r$, except for some block besides the first or last which may have size at most $r$. Every block of length $r$ must have the first or last occurrence of some letter (or else $u$ would contain a pattern in  $\mathcal{F}_{r,3}$), the first block has $r$ first occurrences, and the last block has $r$ last occurrences. This gives the upper bound.

For the lower bound, consider the sequence obtained by starting with $\up(r-1, 1)$ and concatenating $a_x \up(r-1, 1)$ to the end for $x = r, \dots, n, r, \dots, n$. This sequence has length $2(n-r)r+3r-1$, it is $r$-sparse, and clearly avoids $\mathcal{F}_{r,3}$.
\end{enumerate}
\end{proof}

Next we find the exact value of $\Ex(\up(r,1) a_x, n, m)$ and $\Ex(\up(r,1) a_x, n)$ for $x \in \left\{1, \dots, r\right\}$. 

\begin{thm}
For all integers $m \ge 1, n \ge r$ we have
\begin{enumerate}
\item If $x \in \left\{1, \dots, r\right\}$, then $\Ex(\up(r,1) a_x, n, m) = n+(r-1)(m-1)$.
\item If $x \in \left\{1, \dots, r\right\}$, then $\Ex(\up(r,1) a_x, n) = n+x-1$.
\end{enumerate}
\end{thm}

\begin{proof}
As in the last result, we prove a matching upper bound and lower bound for each part.
\begin{enumerate}
\item The upper bound follows since every $(r,2)$-formation contains $\up(r,1) a_x$. For the lower bound, consider the sequence $u$ obtained from concatenating $\up(r-1,m-1)$ with $\up(n, 1)$. For any copy of $\up(r, 1)$ in $u$, any letter occurring after the copy must be making its first occurrence in $u$. Thus $u$ avoids $\up(r,1) a_x$, and it has $n$ distinct letters and $m$ blocks.

\item For the upper bound, let $u = u_1 u_2 \dots$ be an $r$-sparse sequence with $n$ distinct letters that avoids $\up(r,1) a_x$. Note that all of the letters $u_i$ for $i \geq x$ cannot occur later in $u$, or else $u$ would contain $\up(r,1) a_x$. This implies the upper bound.

For the lower bound, consider the sequence obtained by concatenating $\up(n, 1)$ with $\up(x-1, 1)$. Any letter that occurs twice in this sequence must have all occurrences among the first $x-1$ and last $x-1$ letters in the sequence. Thus this sequence avoids $\up(r,1) a_x$, it has length $n+x-1$, and it is $r$-sparse for $n \geq r$.
\end{enumerate}
\end{proof}

\section{Hypermatrices and generalized formations}\label{ddim}
In this section, we extend some of the exact results we proved for $(r, s)$-formations in Section \ref{exact} from sequences to $d$-dimensional 0-1 matrices. Before proving the results, we discuss some additional terminology.

For any family of $d$-dimensional 0-1 matrices $\mathcal{Q}$, define $\ex(n, \mathcal{Q}, d)$ to be the maximum number of ones in a $d$-dimensional matrix of sidelength $n$ that has no submatrix which can be changed to an exact copy of an element of $\mathcal{Q}$ by changing any number of ones to zeroes. When $\mathcal{Q}$ has only one element $Q$, we also write $\ex(n, \mathcal{Q}, d)$ as $\ex(n, Q, d)$. Most research on $\ex(n, \mathcal{Q}, d)$ has been on the case $d = 2$, but several results for $d = 2$ have been generalized to higher values of $d$. For example, Marcus and Tardos proved that $\ex(n, P, 2) = O(n)$ for every permutation matrix $P$ \cite{MT}, and this was later generalized by Klazar and Marcus \cite{KM}, who proved that $\ex(n, P, d) = O(n^{d-1})$ for every $d$-dimensional permutation matrix $P$. Another example is the upper bound $\ex(n, P, 2) = O(n)$ for double permutation matrices $P$ from \cite{gendbl}, which was generalized to an $O(n^{d-1})$ upper bound for $d$-dimensional double permutation matrices in \cite{gtm}.

Define the \emph{projection} $\overline P$ of the $d$-dimensional 0-1 matrix $P$ to be the $(d-1)$-dimensional 0-1 matrix with $\overline P(x_1, \dots, x_{d-1}) = 1$ if and only if there exists $y$ such that $P(y, x_1, \dots, x_{d-1}) = 1$. An \emph{$i$-row} of a $d$-dimensional 0-1 matrix is a maximal set of entries that have all coordinates the same except for the the $i^{th}$ coordinate. An \emph{$i$-cross section} of a $d$-dimensional 0-1 matrix is a maximal set of entries that have the same $i^{\text{th}}$ coordinate. 

Given a $d$-dimensional 0-1 matrix $P$ with $r$ ones, a $(P, s)$-formation is a $(d+1)$-dimensional 0-1 matrix $M$ with $s r$ ones that can be partitioned into $s$ disjoint $(d+1)$-dimensional 0-1 matrices $G_{1}, \dots, G_{s}$ each with $r$ ones so that any two $G_{i}, G_{j}$ have ones in the same sets of $1$-rows of $M$, the greatest first coordinate of any one in $G_{i}$ is less than the least first coordinate of any one in $G_{j}$ for $i < j$, and $P = \overline M$. For each $d$-dimensional $0-1$ matrix $P$, define $\mathcal{F}_{P, s}$ to be the set of all $(P,s)$-formations. Geneson \cite{ff01} proved that $ex(n,\mathcal{F}_{P,3},d+1) \leq 3(ex(n,P,d) n+ n^{d})$ for all positive integers $n$ and $d$-dimensional 0-1 matrices $P$. Here we prove that $ex(n,\mathcal{F}_{P,3},d+1) = 2n^d+\ex(n, P,d)(n-2)$ and $ex(n,\mathcal{F}_{P,2},d+1) = n^d+\ex(n, P,d)(n-1)$ for any $d$-dimensional 0-1 matrix $P$. By using a generalization of the K\H{o}v\'{a}ri-S\'{o}s-Tur\'{a}n upper bound, we also generalize a result from \cite{gcs} by proving that $\ex(n,\mathcal{F}_{P,s},d+1) =\Omega(n^{d+1-o(1)})$ if and only if $s = s(n) = \Omega(n^{1-o(1)})$.

Our first result in this section is an analogue of Theorem~\ref{exactfr}.\ref{exactfr1} for $d$-dimensional 0-1 matrices.

\begin{thm}
If $P$ is a $d$-dimensional 0-1 matrix, then $ex(n,\mathcal{F}_{P,2},d+1) = n^d+\ex(n, P,d)(n-1)$.
\end{thm}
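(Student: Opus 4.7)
The plan is to imitate the argument for $\F_{r,2}(n,m) = n+(r-1)(m-1)$ from Section \ref{exact}, with 1-rows of $A$ playing the role of letters and values of the first coordinate playing the role of blocks.

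For the upper bound, let $A$ be a $(d+1)$-dimensional 0-1 matrix of sidelength $n$ that avoids $\mathcal{F}_{P,2}$. In each 1-row of $A$ that contains at least one 1, delete the 1 with smallest first coordinate; this removes at most $n^d$ ones since there are $n^d$ 1-rows. Call the result $A'$ and write $A'_v$ for the 1-cross section of $A'$ at first coordinate $v$. Note that $A'_1$ is all zero: any 1 with first coordinate equal to $1$ would be the minimum of its 1-row and would have been deleted. I claim that $A'_v$ avoids $P$ for every $v \in \{2,\ldots,n\}$. If not, pick $r$ ones of $A'_v$ forming a copy of $P$, at positions $(v,\mathbf{y}^{(i)})$ for $i = 1,\ldots,r$, and pair each with the 1 previously deleted from the same 1-row, at position $(a_i,\mathbf{y}^{(i)})$ with $a_i < v$. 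Restricting $A$ to the sub-cube with first coordinates in $\{a_1,\ldots,a_r,v\}$ and with sub-indices in dimensions $2$ through $d+1$ realizing the embedded copy of $P$, and keeping only these $2r$ ones, produces a $(P,2)$-formation: the first half uses first coordinates in $\{a_1,\ldots,a_r\}$, the second half uses first coordinate $v$, both halves are supported on the same $r$ 1-rows, and both project to $P$. This contradicts the hypothesis on $A$, so $|A'_v| \leq \ex(n,P,d)$, and summing gives $|A| \leq n^d + (n-1)\ex(n,P,d)$.

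For the lower bound, fix a $d$-dimensional 0-1 matrix $B$ of sidelength $n$ with $\ex(n,P,d)$ ones that avoids $P$, and build $A$ by setting the 1-cross section $A_1$ equal to the all-ones $d$-dimensional matrix of sidelength $n$, and $A_v = B$ for $v = 2,\ldots,n$. Then $A$ has $n^d + (n-1)\ex(n,P,d)$ ones. To see that $A$ avoids $\mathcal{F}_{P,2}$, note that in any $(P,2)$-formation inside $A$, the second half $G_2$ has all first coordinates at least $2$ (since $G_1$ lies strictly below $G_2$ in the sub-cube's first coordinate, which is a strictly increasing embedding into $\{1,\ldots,n\}$). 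Thus every 1 of $G_2$ lies in some cross-section $A_v = B$ with $v \geq 2$. The projection of $G_2$ to the last $d$ coordinates yields $r$ positions that form a copy of $P$, and each such position is a 1 of $B$, so $B$ contains $P$ and we reach a contradiction.

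The main obstacle is verifying that the configuration extracted in the upper bound really is a $(P,2)$-formation rather than some looser superpattern. In particular, one must confirm that the projection of the chosen sub-cube equals $P$ exactly and that $G_1, G_2$ have ones in the same $r$ 1-rows of the sub-cube. Both conditions follow from the construction: the $2r$ retained ones occupy exactly the $r$ distinct 1-rows corresponding to the embedded copy of $P$ in $A'_v$, and the containment relation allows zeroing out any extraneous 1s in the other 1-rows of the sub-cube.
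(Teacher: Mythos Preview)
Your proof is correct and follows essentially the same approach as the paper: delete the first one in each $1$-row to bound the remaining cross sections by $\ex(n,P,d)$ for the upper bound, and concatenate an all-ones cross section with $n-1$ copies of an extremal $P$-avoiding matrix for the lower bound. The paper's version is terser (it asserts the containment of a $(P,2)$-formation and the avoidance in the construction without spelling out the verification), while you supply the details, but the underlying argument is identical.
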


\begin{proof}
Suppose that $A$ is a $(d+1)$-dimensional 0-1 matrix of sidelength $n$ that avoids $\mathcal{F}_{P,2}$. Delete the first one in every $1$-row. This empties the first $1$-cross section, leaving a $(d+1)$-dimensional 0-1 matrix with at most $n-1$ nonempty $1$-cross sections that must have at most $\ex(n, P,d)$ ones per $1$-cross section, or else $A$ would have contained an element of $\mathcal{F}_{P,2}$. Thus $A$ has at most $n^d+\ex(n, P,d)(n-1)$ ones, giving the upper bound.

For the lower bound, consider the $(d+1)$-dimensional 0-1 matrix obtained from concatenating a $d$-dimensional all-ones matrix with $n-1$ copies of a $d$-dimensional 0-1 matrix with $\ex(n, P, d)$ ones that avoids $P$. This matrix has $n^d+\ex(n, P,d)(n-1)$ ones and clearly avoids $\mathcal{F}_{P,2}$.
\end{proof}

The next equality improves on the bound $ex(n,\mathcal{F}_{P,3},d+1) \leq 3(ex(n,P,d) n+ n^{d})$ from \cite{ff01}. It is an analogue of Theorem~\ref{exactfr}.\ref{exactfr2} for $d$-dimensional 0-1 matrices.

\begin{thm}
If $P$ is a $d$-dimensional 0-1 matrix, then $ex(n,\mathcal{F}_{P,3},d+1) = 2n^d+\ex(n, P,d)(n-2)$.
\end{thm}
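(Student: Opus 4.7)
The plan is to adapt the proof of the preceding theorem (for $\mathcal{F}_{P,2}$) by adding one extra deletion step, in direct analogy with how Theorem \ref{fr3n} extends Theorem \ref{fr2n} by deleting last occurrences in addition to first occurrences.

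For the upper bound, I would take a $(d+1)$-dimensional $0$-$1$ matrix $A$ of sidelength $n$ avoiding $\mathcal{F}_{P,3}$ and delete both the first and the last one in every $1$-row. Since there are $n^d$ many $1$-rows, this removes at most $2n^d$ ones, and it empties the first and last $1$-cross sections (any one in either cross section is automatically the first or last one in its $1$-row). The task is then to show that each of the remaining $n-2$ middle $1$-cross sections holds at most $\ex(n,P,d)$ ones. Otherwise, some middle cross section $C_i$, regarded as a $d$-dimensional matrix, would contain a copy of $P$; combining this copy with the deleted first and last ones in the corresponding $1$-rows produces three disjoint blocks $G_1,G_2,G_3$ with $r$ ones each, sharing the same set of $1$-rows and with strictly increasing ranges of first coordinate. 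This is a $(P,3)$-formation contained in $A$, a contradiction.

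For the lower bound, I would concatenate along the first coordinate a $d$-dimensional all-ones matrix, then $n-2$ copies of a $d$-dimensional $P$-avoiding matrix $Q$ of sidelength $n$ realizing $\ex(n,P,d)$ ones, then one more all-ones slice. The total one-count is $2n^d+(n-2)\ex(n,P,d)$, as required.

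The step I expect to require the most care is verifying that this construction indeed avoids $\mathcal{F}_{P,3}$. The concern is that in a hypothetical embedded $(P,3)$-formation, the middle block $G_2$ is not a priori required to sit inside a single $1$-cross section of $A$—its $r$ ones could straddle several consecutive middle cross sections. The resolution is that $P$ has at least one one, so $G_1$ and $G_3$ are both nonempty, and the strict first-coordinate separation conditions then force every first coordinate appearing in $G_2$ to lie in $\{2,\dots,n-1\}$. Consequently $G_2$ lies entirely inside the middle region of $A$, whose projection onto the last $d$ coordinates is precisely $Q$, so $\overline{G_2}=P$ would appear as a sub-pattern of $Q$, contradicting the choice of $Q$.
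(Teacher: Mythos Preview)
Your proposal is correct and follows essentially the same argument as the paper: delete first and last ones in each $1$-row for the upper bound, and concatenate an all-ones slice, $n-2$ copies of a $P$-avoiding extremal matrix, and another all-ones slice for the lower bound. Your justification that the lower-bound construction avoids $\mathcal{F}_{P,3}$ (handling the possibility that $G_2$ straddles several middle slices via projection onto $Q$) is in fact more detailed than the paper's, which simply asserts the construction ``clearly avoids $\mathcal{F}_{P,3}$.''
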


\begin{proof}
Suppose that $A$ is a $(d+1)$-dimensional 0-1 matrix of sidelength $n$ that avoids $\mathcal{F}_{P,3}$. Delete the first and last one in every $1$-row. This empties the first and last $1$-cross sections, leaving a $(d+1)$-dimensional 0-1 matrix with at most $n-2$ nonempty $1$-cross sections that must have at most $\ex(n, P,d)$ ones per $1$-cross section, or else $A$ would have contained an element of $\mathcal{F}_{P,3}$. Thus $A$ has at most $2n^d+\ex(n, P,d)(n-2)$ ones, giving the upper bound.

For the lower bound, consider the $(d+1)$-dimensional 0-1 matrix obtained from concatenating a $d$-dimensional all-ones matrix, $n-2$ copies of a $d$-dimensional 0-1 matrix with $\ex(n, P, d)$ ones that avoids $P$, and a $d$-dimensional all-ones matrix again. This matrix has $2n^d+\ex(n, P,d)(n-2)$ ones and clearly avoids $\mathcal{F}_{P,3}$.
\end{proof}

Wellman and Pettie asked how large must $s = s(n)$ be for $\Ex(a_s, n, n) = \Omega(n^{2-o(1)})$ \cite{petwel}. Geneson proved that $\Ex(a_s, n, n) = \Omega(n^{2-o(1)})$ if and only if $s = s(n) = \Omega(n^{1-o(1)})$ using the K\H{o}v\'{a}ri-S\'{o}s-Tur\'{a}n theorem \cite{gcs}. We extend this bifurcation result to formations in $d$-dimensional 0-1 matrices, but we need to use an extension of the K\H{o}v\'{a}ri-S\'{o}s-Tur\'{a}n theorem for $d$-dimensional 0-1 matrices. One such extension was proved in \cite{gtm}, where it was shown that $\ex(n,R^{k_1, \ldots , k_d},d)=O(n^{d- \alpha(k_1, \ldots , k_d)})$, where $\alpha = {\max({k_1, \ldots , k_d}) \over  k_1 \cdot k_2 \cdots k_d }$. This bound is not sufficient to extend the bifurcation result, but the same proof that was used in \cite{gtm} implies the following stronger result.

\begin{thm}
\label{upperbound}
For fixed $k_1, \dots, k_d$, $\ex(n,R^{j, k_1, \ldots , k_d},d+1)=O(j^{ {1 \over  k_1 \cdot k_2 \cdots k_d }}n^{d+1-  {1 \over  k_1 \cdot k_2 \cdots k_d }})$.
\end{thm}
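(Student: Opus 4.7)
The plan is to adapt the K\H{o}v\'{a}ri-S\'{o}s-Tur\'{a}n-style double counting used in \cite{gtm} to prove $\ex(n,R^{k_1, \ldots , k_d},d)=O(n^{d-\alpha})$, but to track the dependence on the first sidelength $j$ instead of absorbing it into the implicit constant. Keeping $j$ explicit forces the final upper bound to be linear in $j$ before a $K$-th root (where $K = k_1 \cdots k_d$) is extracted, and it is precisely this root that produces the $j^{1/K}$ factor in the statement.

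Fix a $(d+1)$-dimensional 0-1 matrix $A$ of sidelength $n$ that avoids $R^{j, k_1, \ldots, k_d}$, let $N$ be its total number of ones, and for each $i \in [n]$ let $N_i$ be the number of ones in the $i$th $1$-cross section $S_i$, so $N = \sum_{i=1}^n N_i$. The double count is indexed by tuples $\vec B = (B_1, \ldots, B_d)$ with $B_\ell \subseteq [n]$ and $|B_\ell| = k_\ell$. Let $c(\vec B)$ be the number of slices $S_i$ whose restriction to $B_1 \times \cdots \times B_d$ is all ones. The avoidance hypothesis forces $c(\vec B) \leq j-1$ for every $\vec B$, since otherwise $j$ slices all containing the same all-ones sub-box would yield a copy of $R^{j, k_1, \ldots, k_d}$. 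Summing over $\vec B$ and switching the order of summation gives
\[
\sum_{i=1}^n T_i \;=\; \sum_{\vec B} c(\vec B) \;\leq\; (j-1)\prod_{\ell=1}^d \binom{n}{k_\ell},
\]
where $T_i$ counts the $k_1 \times \cdots \times k_d$ all-ones sub-boxes of $S_i$.

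Next I would lower-bound each $T_i$ in terms of $N_i$ by the iterated-convexity argument from \cite{gtm}. Viewing $S_i$ as an indicator $f_i \colon [n]^d \to \{0,1\}$ with $\sum f_i = N_i$, applying Jensen's inequality to $x \mapsto \binom{x}{k_\ell}$ successively in each coordinate direction $\ell = 1, \ldots, d$ should yield a tensor-power bound of the shape
\[
T_i \;\gtrsim\; \prod_{\ell=1}^d \binom{n}{k_\ell}\cdot\Bigl(\tfrac{N_i}{n^d}\Bigr)^{K}.
\]
A final application of Jensen, now using convexity of $x \mapsto x^K$ across the $n$ slices, upgrades this to $\sum_i T_i \;\gtrsim\; n \prod_\ell \binom{n}{k_\ell}(N/n^{d+1})^K$. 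Combining with the upper bound above, the $\binom{n}{k_\ell}$ factors cancel and solving for $N$ produces $N = O(j^{1/K} n^{d+1 - 1/K})$, which is the claimed bound.

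The main obstacle is the bookkeeping in the iterated-convexity step: one must verify that the sidelength factors telescope correctly across the $d$ applications of Jensen so that the exponent on $N_i/n^d$ is exactly $K$ and the resulting exponent on $n$ after combining with the outer bound is exactly $d+1-1/K$. Beyond this the argument is routine, and the only new ingredient relative to \cite{gtm} is keeping $j$ explicit on the right-hand side of the double-count inequality so that it appears as $j^{1/K}$ rather than being swallowed by the implicit constant.
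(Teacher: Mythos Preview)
Your proposal is correct and matches the paper's approach exactly: the paper does not give a separate proof but simply remarks that the argument from \cite{gtm} already yields this stronger bound once the parameter $j$ is tracked rather than absorbed into the implicit constant, which is precisely what you outline. The double count with $c(\vec B)\le j-1$ combined with the iterated-Jensen lower bound on $\sum_i T_i$ produces $N = O(j^{1/K}n^{d+1-1/K})$ just as you describe.
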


Using Theorem \ref{upperbound}, we prove the following generalization of the result of Geneson \cite{gcs}.

\begin{thm}
If $P$ is a nonempty $d$-dimensional 0-1 matrix, then $\ex(n,\mathcal{F}_{P,s},d+1) =\Omega(n^{d+1-o(1)})$ if and only if $s(n) = \Omega(n^{1-o(1)})$.
\end{thm}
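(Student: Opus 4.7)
The plan is to prove both directions of the equivalence. The easy direction is ``if'': assuming $s(n) = \Omega(n^{1-o(1)})$, I will construct a dense avoider. Take the $(d+1)$-dimensional 0-1 matrix of sidelength $n$ that has ones at every position whose first coordinate lies in $\{1, 2, \dots, s-1\}$ and zeros elsewhere. This matrix has $(s-1)n^d$ ones, which is $\Omega(n^{d+1-o(1)})$. Since any $(P,s)$-formation partitions into groups $G_1,\dots,G_s$ whose first-coordinate ranges are strictly ordered and nonempty, any embedded copy would require at least $s$ distinct first-coordinate values, while my construction uses only $s-1$; hence it avoids $\mathcal{F}_{P,s}$.

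For the ``only if'' direction I argue the contrapositive, reducing to Theorem \ref{upperbound}. Let $k_1,\dots,k_d$ be the sidelengths of $P$, so $P$ is a submatrix of the $d$-dimensional all-ones matrix $R^{k_1,\dots,k_d}$. The key observation is that the $(d+1)$-dimensional all-ones matrix $R^{s,k_1,\dots,k_d}$ itself contains a $(P,s)$-formation: in each of its $s$ first-coordinate cross-sections, place a copy of $P$ at the same last-$d$-coordinate positions. A direct check against the clauses of the definition (total of $sr$ ones, matching sets of 1-rows across the $G_i$, strictly ordered first-coordinate ranges, and projection equal to $P$) confirms this. Hence any matrix avoiding $\mathcal{F}_{P,s}$ also avoids $R^{s,k_1,\dots,k_d}$, giving $\ex(n,\mathcal{F}_{P,s},d+1) \leq \ex(n,R^{s,k_1,\dots,k_d},d+1) = O(s^{1/K} n^{d+1-1/K})$ by Theorem \ref{upperbound}, where $K = k_1 k_2 \cdots k_d$.

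To conclude, suppose $s(n)$ is not $\Omega(n^{1-o(1)})$. Then there exist $\delta > 0$ and infinitely many $n$ with $s(n) \leq n^{1-\delta}$, and on that subsequence the bound above becomes $O(n^{d+1-\delta/K})$, which rules out $\ex(n,\mathcal{F}_{P,s},d+1) = \Omega(n^{d+1-o(1)})$. The main conceptual step is invoking Theorem \ref{upperbound} with its sharp dependence on the first sidelength: this is exactly what supplies the factor of $s^{1/K}$ needed to interpolate between the two regimes, whereas the coarser bound $\ex(n,R^{k_1,\dots,k_d},d) = O(n^{d-1/K})$ would only yield $O(n \cdot n^{d-1/K}) = O(n^{d+1-1/K})$ uniformly in $s$, which is insufficient to force the phase transition. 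The mildly fiddly part of the argument is verifying the clauses of the formation definition for the all-ones embedding.
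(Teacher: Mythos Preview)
Your proof is correct and follows essentially the same approach as the paper: the same lower-bound construction for the ``if'' direction, and the same reduction to Theorem~\ref{upperbound} via the containment $R^{s,k_1,\dots,k_d} \supseteq (P,s)$-formation for the contrapositive. One minor point: in the ``if'' direction you should take $\min(s-1,n)$ full cross-sections rather than $s-1$ (as the paper does), and your closing commentary about the ``coarser bound'' being insufficient is a bit muddled---an $O(n^{d+1-1/K})$ bound uniform in $s$ would in fact suffice if it held, the real issue is that the implicit constant in the weaker form of the K\H{o}v\'ari--S\'os--Tur\'an bound depends on \emph{all} sidelengths, so it cannot be applied when $s$ grows with $n$.
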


\begin{proof}
Suppose that $P$ is a nonempty $d$-dimensional 0-1 matrix with dimensions $k_1 \times \cdots \times k_d$. If $s = \Omega(n^{1-o(1)})$, then any $(d+1)$-dimensional 0-1 matrix that has $\min(s-1, n)$ $1$-cross sections with all entries equal to $1$ and $n-\min(s-1, n)$ $1$-cross sections with all entries equal to $0$ will avoid every $(P,s)$-formation. Thus in this case we have $\ex(n,\mathcal{F}_{P,s},d+1) \geq n^d (s-1) = \Omega(n^{d+1-o(1)})$.

If $s \neq \Omega(n^{1-o(1)})$, then there exists a constant $\alpha < 1$ and an infinite sequence of positive integers $i_1 < i_2 <\dots$ such that $s(i_j) < i_{j}^{\alpha}$ for each $j > 0$. Thus, it suffices to show that for every $0 < \alpha < 1$, there exists a constant $\beta < d+1$ such that $\ex(n,\mathcal{F}_{P,\ceil{n^{\alpha}}},d+1) = O(n^{\beta})$. However this follows immediately from Theorem \ref{upperbound}, since every $(d+1)$-dimensional 0-1 matrix that contains $R^{\ceil{n^{\alpha}},k_1, \ldots , k_d}$ must also contain an element of $\mathcal{F}_{P,\ceil{n^{\alpha}}}$.
\end{proof}

\section{Conclusion}

In this paper, we improved the upper bound on $\Ex(\up(r, 2), n)$ by showing that every $((r-1)\binom{r}{2}+1,3)$-formation contains $\up(r, 2)$. We proved that this result is sharp up to a constant factor by showing that there exist $(m, 3)$-formations with $m = \Omega(r^3)$ which avoid $\up(r, 2)$. More generally, we showed that $\fl(\up(r,t)) = \Theta(r^{2t-1 \choose t})$, where the constant in the bound depends only on $t$. 

Since these bounds are sharp but not exact, they leave some natural open problems. First, what is the exact value for the minimum $m = m(r)$ such that every $(m, 3)$-formation must contain $\up(r, 2)$? More generally, what is the exact value for the minimum $m = m(r, t)$ such that every $(m, 2t-1)$-formation contains $\up(r, t)$? 

It was shown in \cite{gpt} that $\fw(u) = 4$ and $\Ex(u, n) = \Theta(n \alpha(n))$ for any sequence $u$ of the form $a v a v' a$ such that $a$ is a letter, $v$ is a nonempty sequence of distinct letters excluding $a$, and $v'$ is obtained from $v$ by only moving the first letter of $v$. As in the last paragraph, there is the problem of determining the minimum $m = m(v)$ such that every $(m, 4)$-formation contains $a v a v' a$.

We determined the exact values of $\F_{r, 2}(n)$, $\F_{r, 3}(n)$, $\F_{r, 2}(n, m)$, and $\F_{r, 3}(n, m)$, and we also found the exact values of $\Ex(\up(r,1) a_x, n)$ and $\Ex(\up(r,1) a_x, n, m)$ for $x \in \left\{1, \dots, r\right\}$. A natural problem is to determine the exact value of $\Ex(\up(r,1) a_x a_y, n)$ and $\Ex(\up(r,1) a_x a_y, n, m)$ for any $x, y \in \left\{1, \dots, r\right\}$.

We also affirmed a conjecture from \cite{gpt} that  $\fw(abc(acb)^t a b c) = 2t+3$ for all $t \geq 0$ and that $\Ex(a b c (a c b)^{t} a b c, n) = n 2^{\frac{1}{t!}\alpha(n)^{t} \pm O(\alpha(n)^{t-1})}$ for $t \geq 1$. In order to determine other families of sequences $u$ for which $\fw(u)$ gives sharp upper bounds on $\Ex(u, n)$, it would be useful to have a faster algorithm for computing $\fw(u)$. The current fastest algorithms are in \cite{gtseq} and \cite{g_alg}. The latter algorithm is faster than the former when the number of distinct letters is fixed, as the length of the sequences goes to infinity. However the former algorithm is faster than the latter when the number of distinct letters approaches the length of the sequence.

\end{document}